\documentclass[3p,11pt,authoryear]{elsarticle}

\usepackage{amsmath}
\usepackage{amssymb}
\usepackage{amsfonts}
\usepackage{acronym}
\usepackage{amsthm}
\usepackage{epigraph}
\usepackage{xcolor}
\usepackage{soul}
\usepackage{booktabs}
\usepackage{graphicx}
\usepackage{algorithm}
\usepackage{algpseudocode}
\usepackage{url}
\usepackage{appendix}
\usepackage{subcaption}

\graphicspath{{figures/}}

\DeclareMathOperator*{\minimize}{minimize}

\newtheorem{example}{Example}

\newtheorem{definition}{Definition}
\newtheorem{proposition}{Proposition}

\newtheorem{theorem}{Theorem}
\newtheorem{note}{Note}
\newtheorem{lemma}{Lemma}

\usepackage{todonotes}

\begin{document}

\begin{frontmatter}

\title{Pairwise comparisons meet optimal transport theory}

\author[aff1]{Matteo Brunelli}
\ead{matteo.brunelli@unitn.it}

\author[aff2]{Silvia Lorenzini}
\ead{silvia.lorenzini@unipg.it}

\address[aff1]{Department of Industrial Engineering, University of Trento, Via Sommarive 9, 38123 Trento, Italy}
\address[aff2]{Department of Economics, University of Perugia, Via Pascoli 20, 06123 Perugia, Italy}

\begin{abstract}
{ In this paper, we reinterpret pairwise comparison theory through the lens of optimal transport. The main contribution is the connection established between pairwise comparison matrices, doubly stochastic scaling, and entropy-regularized transport problems. Using Sinkhorn scaling, we show how consistency, priority derivation, and inconsistency assessment can be studied within a common optimal-transport framework. As methodological consequences of this connection, we obtain a new characterization of consistency, a Sinkhorn-based prioritization method, and an entropy-based inconsistency index. We study these proposals both analytically and numerically and compare them with established methods from the literature.}
\end{abstract}

\begin{keyword}
consistency \sep
multi-criteria decision analysis \sep
optimal transport \sep 
pairwise comparisons \sep 
priority vector 
\end{keyword}

\end{frontmatter}
\section{Introduction}


Organizations and individuals alike are routinely confronted with decisions that cannot be reduced to a single objective. Whether selecting a supplier, prioritizing projects, allocating resources, or evaluating policy alternatives, decision-makers must typically balance cost, performance, risk, sustainability, and other criteria that may conflict with one another.
Within MCDA, a particularly important role is played by pairwise comparisons, since they make it possible to decompose a complex evaluation task into a collection of simpler judgments involving only two elements at a time. This reduction of cognitive burden is one of the key reasons why pairwise comparison methods remain so attractive in theory and in applications \citep{KouEtAl2016}.

Pairwise comparison matrices have long been used as convenient mathematical structures to collect, organize, and analyze subjective judgments.
Over the years, these matrices have been studied from different mathematical viewpoints. For instance, \citet{LipovetskyConklin2002} interpreted them as contingency tables, \citet{GassRapcsak2004} analyzed them using singular value decomposition, \citet{Barzilai1998} proposed to regard them as points in a linear space, and \citet{KouLin2014} considered them as alignments of column vectors. The existence of multiple interpretations is valuable not only because it provides alternative analytical tools for handling classical issues such as inconsistency estimation and priority derivation, but also because it strengthens the conceptual foundations of the pairwise comparison framework itself. In this sense, each new interpretation may open the way to new methodological developments while, at the same time, offering an independent justification for the use of pairwise comparisons.

Among the classical problems arising in the study of pairwise comparison matrices, two are especially prominent. The first is the derivation of a priority vector, that is, the extraction of a vector of weights capable of synthesizing the information encoded in the judgments. The second is the assessment of inconsistency, namely the quantification of the extent to which a set of judgments departs from perfect coherence. These two issues are deeply related and, together, form the core of much of the literature on pairwise comparisons: { a wide range of priority estimation methods \citep{ChooWedley2004} and inconsistency indices \citep{Brunelli2018} have been developed over the years.} 

Optimal transport theory is the study of optimal transportation and allocation of resources and is highly relevant in { Operational Research}. In its classical discrete form, it concerns the problem of transporting mass from a set of supply nodes to a set of demand nodes in such a way that the total transportation cost is minimized. Although this problem admits a linear programming formulation and can be solved using well-established algorithms such as the transportation simplex method \citep[§9.2]{HillierLieberman2020}, the recent expansion of optimal transport into a wide range of disciplines has considerably increased the demand for faster and more scalable computational procedures. Indeed, opti
mal transport has found important applications in probability theory \citep{Villani2008}, economics \citep{Galichon2016}, pricing \citep{QuEtAl2025}, portfolio optimization \citep{NguyenEtAl2025}, sensitivity analysis \citep{BorgonovoEtAl2025}, and machine learning \citep{Montesuma2024}. In many of these contexts, the relevant instances are large-scale and require algorithms capable of delivering accurate approximations in limited computational time.

A major development in optimal transport is entropic regularization, which replaces the original linear program with a smooth strictly convex optimization problem and leads naturally to Sinkhorn scaling. In particular, the Sinkhorn--Knopp algorithm computes regularized transport solutions by alternating row and column normalizations, while also providing an information-theoretic interpretation of transport plans.

{ In this paper, we use these ideas to reinterpret pairwise comparison theory. 
We view a pairwise comparison matrix as a transformation of the cost matrix 
in an entropy-regularized transport problem, and we use the associated 
Sinkhorn scaling to study consistency, priority derivation, and inconsistency. 
The main contribution of the paper is therefore the connection established 
between pairwise comparisons and optimal transport theory, rather than the 
proposal of an isolated weighting method or inconsistency index. The latter 
arise naturally as methodological consequences of this connection. This 
perspective is relevant to both fields: it provides a new interpretation of 
pairwise comparison matrices and identifies them as a new application domain 
for entropy-regularized optimal transport. We do not claim that the resulting 
methods are universally preferable to existing approaches; rather, they 
provide mathematically sound alternatives grounded in a common 
optimal-transport framework.}

The paper is organized as follows. Section~\ref{sec:pairwise} introduces the basic notions of pairwise comparison theory that are needed in the sequel, with particular attention to priority vectors and inconsistency. Section~\ref{sec:OT} recalls the main concepts from optimal transport theory, including {entropic} regularization and Sinkhorn scaling. Section~\ref{sec:bridge} develops the bridge between the two theories. In particular, we show how the Sinkhorn algorithm can be interpreted as a prioritization method and how the resulting doubly stochastic matrix can be used as a basis for the estimation of inconsistency. Section~\ref{sec:numerical} presents a numerical study aimed at comparing this new perspective with existing interpretation paradigms and established methods from the literature. Finally, Section~\ref{sec:conclusions} concludes the paper with a discussion of the results and possible directions for future research. { All proofs are provided in the Appendix.}

\section{Pairwise comparisons}
\label{sec:pairwise}

In multi-criteria decision analysis (MCDA), it is common practice to decompose preference elicitation problems into smaller and more tractable tasks. A prominent way of doing so is through pairwise assessments, in which only two elements are considered at a time \citep{KouEtAl2016}. This principle is central to several MCDA methods, including Multi-Attribute Value Theory (MAVT) and the Analytic Hierarchy Process (AHP). In MAVT, when an additive value function is assumed, preference information is elicited through trade-off judgments between levels of different criteria. More precisely, experts are asked to determine how an improvement in one criterion can compensate for a deterioration in another, thereby allowing the estimation of scaling constants that reflect the relative importance of the criteria. In AHP, by contrast, experts are asked to directly assess the intensity of importance of one criterion relative to another by means of pairwise comparisons. In both cases, the final goal is the derivation of criteria weights.

We consider a \emph{pairwise comparison matrix} $\mathbf{A}=(a_{ij})_{n \times n}$ whose positive entries are subjective approximations of the ratios between the components of a vector. In accordance with the literature, we assume $a_{ii}=1$ for all $i$, and $a_{ij}=1/a_{ji}$ for all $i,j$. These matrices are well-known because of their extensive use in the Analytic Hierarchy Process \citep{Saaty1977}. We denote by $\mathcal{A}$ the set of all such matrices. The most common interpretation of the entries of a pairwise comparison matrix is as ratios of positive weights $w_1,\ldots,w_n$. Thus, a pairwise comparison matrix has the following structure,
\[
\mathbf{A} = \;
\bordermatrix{ \; & w_1 & w_2 & \cdots & w_n \cr
    w_1 & a_{11} & a_{12} & \cdots & a_{1n} \cr
    w_2 & a_{21} & a_{22} & \cdots & a_{2n} \cr
   \vdots & \vdots & \vdots & \ddots & \vdots \cr 
   w_{n} & a_{n1} & a_{n2} & \cdots & a_{nn}  
}
=
\begin{pmatrix}
    1 & a_{12} & \cdots & a_{1n} \\
    \frac{1}{a_{12}} & 1 & \cdots & a_{2n} \\
    \vdots & \vdots & \ddots & \vdots \\
    \frac{1}{a_{1n}} & \frac{1}{a_{2n}} & \cdots & 1    
\end{pmatrix}\, ,
\]
{where each entry of $\mathbf{A}$ represents a subjective estimation of the ratio between weights, i.e., $a_{ij}\approx w_{i}/w_{j}$.} A pairwise comparison matrix is \emph{consistent} if and only if there exists a \emph{compatible} (priority) vector $\mathbf{w}=(w_1,\ldots,w_n)^{\top}$ such that $a_{ij}={w_i}/{w_j}$ for all $i,j$. Since only ratios between their components matter, two vectors $\mathbf{w}$ and $\mathbf{w}'$ are said to be \emph{equivalent} if there exists $\beta>0$ such that $\mathbf{w}=\beta\mathbf{w}'$. Consistency can also be characterized through a relation on triples of judgments, namely, $a_{ik}=a_{ij}a_{jk}$ for all $i,j,k$. Equivalently, $\mathbf{A}$ is consistent if and only if $\operatorname{rank}(\mathbf{A})=1$. To simplify notation, in the rest of the paper we denote by $\mathcal{A}^{*}$ the set of all consistent pairwise comparison matrices.

\subsection{Priority vector}

Given a pairwise comparison matrix $\mathbf{A}$, the first fundamental task is to derive a suitable priority vector $\mathbf{w}$. The aim is to exploit the information contained in the pairwise ratios in order to obtain a representative estimate of the underlying weights. This operation is trivial if the matrix is consistent, since any column of $\mathbf{A}$ can be taken as a compatible weight vector. By contrast, if $\mathbf{A}$ is inconsistent, there is not a compatible vector and finding a representative one becomes a nontrivial task and has given rise to several different methods and interpretations \citep{Lin2007}.

{ One method is the \emph{eigenvector method}, proposed by \citet{Saaty1977}, who argued} in favor of taking the eigenvector associated with the Perron--Frobenius eigenvalue of $\mathbf{A}$. More precisely, one considers the eigenpair solution of
\begin{equation}
\label{eq:eigenvector}
\mathbf{A}\mathbf{w}=\lambda_{\max}\mathbf{w}
\end{equation}
where $\lambda_{\max}$ is the largest eigenvalue of $\mathbf{A}$.

Another important method is the \emph{geometric mean method}, introduced by \citet{Rabinowitz1976},
\begin{equation}
w_{i}=\left( \prod_{j=1}^{n} a_{ij}\right)^{1/n}
\end{equation}
which corresponds to the analytic solution of a logarithmic least squares problem \citep{CrawfordWilliams1985}.


A further heuristic, sometimes used in practice, is the \emph{normalized columns sum} (NCS) method. First, the columns of $\mathbf{A}$ are normalized so that they sum to 1. Then the resulting matrix $\tilde{\mathbf{A}}=(\tilde{a}_{ij})_{n \times n}$ is considered and the weights are obtained through
\begin{equation}
\label{eq:NCS}
w_{i}=\sum_{j=1}^{n}\tilde{a}_{ij}.
\end{equation}

Since only ratios between weights are relevant, replacing the sum with the average would yield an equivalent vector. This alternative formulation leads naturally to the appealing geometric interpretation illustrated in Figure \ref{fig:interpretation}.

\begin{figure}[ht]
    \centering
    \includegraphics[width=0.35\linewidth]{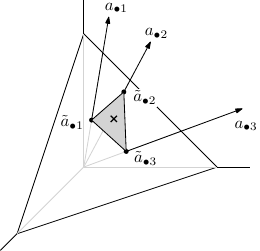}
    \caption{Geometric interpretation of the normalized columns sum method when the sum is replaced by the average. The symbol $\times$ represents the obtained priority vector as the barycenter of the polytope with extreme points $a_{\bullet 1},a_{\bullet 2},a_{\bullet 3}$. Here, $a_{\bullet j}$ represents the $j$th column of the matrix $\mathbf{A}$.}
    \label{fig:interpretation}
\end{figure}

The normalized columns sum method remains, however, rather arbitrary. Indeed, it is not clear why one should normalize columns and then aggregate rows, rather than normalize rows and then aggregate columns. In the consistent case, both procedures lead to equivalent priority vectors; in the inconsistent case---which is by far the most relevant in practice---they generally produce different results. This asymmetry makes it difficult to justify one normalization procedure over the other.

The three priority elicitation methods \eqref{eq:eigenvector}--\eqref{eq:NCS} clearly yield the same priority vector for consistent matrices, but they may produce different weights otherwise. \citet{IshizakaLusti2006} presented a numerical study focused on these three methods, although there exist many others \citep{ChooWedley2004,Lin2007}.

\subsection{Consistency}

Another fundamental issue is the quantification of inconsistency, that is, the assessment of how far the preferences collected in a pairwise comparison matrix depart from consistency. This is usually done by means of \emph{inconsistency indices}, namely, functions $I:\mathcal{A}\rightarrow\mathbb{R}$ such that $I(\mathbf{A})$ quantifies the deviation of $\mathbf{A}$ from consistency. Here we recall only a few representative indices for the sake of comparison.

{ One method was introduced by \citet{Saaty1977}}, who showed that $\lambda_{\max}=n$ if and only if $\mathbf{A}$ is consistent, and is strictly greater otherwise. On the basis of this property, he proposed
\begin{equation}
\label{eq:CI}
CI(\mathbf{A})= \frac{\lambda_{\max}-n}{n-1}
\end{equation}
to estimate the inconsistency of $\mathbf{A}$. { \citet{Saaty1977} proposed to use the average $CI$ of randomly generated matrices as a yardstick against which one can evaluate the value of $CI(\mathbf{A})$.}

\citet{CrawfordWilliams1985} presented an inconsistency index which was later studied by \citet{AguaronMoreno2003} and called the geometric consistency index:
\begin{equation}
GCI(\mathbf{A}) = \frac{2}{(n-1)(n-2)} \sum_{i < j } \log^{2}\left( a_{ij}\frac{w_{j}}{w_{i}} \right),
\end{equation}
where $\mathbf{w}$ is estimated using the geometric mean method.

\citet{PelaezLamata2003} proposed an index that averages the local inconsistencies of triads:
\begin{equation}
PL(\mathbf{A})=\frac{1}{\binom{n}{3}}\sum_{i<j<k} \left( \frac{a_{ij}a_{jk}}{a_{ik}} + \frac{a_{ik}}{a_{ij}a_{jk}} - 2 \right).
\end{equation}
This index is proportional to one previously introduced by \citet{ShiraishiObataDaigo1998}.

\citet{SteinMizzi2007} proposed the harmonic consistency index
\begin{equation}
HCI(\mathbf{A}) = \frac{(HM(\mathbf{A})-n)(n+1)}{n(n-1)},
\end{equation}
where $HM(\mathbf{A})$ is the harmonic mean of the columns of $\mathbf{A}$. In fact, $HM(\mathbf{A}) = n$ if and only if $\mathbf{A}$ is consistent, and it is strictly greater otherwise.

Other indices have been proposed to account for the maximum local inconsistency, thereby departing from the averaging spirit that characterizes most of the literature. The foremost example is the index proposed by \citet{DuszakKoczkodaj1994} and axiomatized by \citet{Csato2018},
\begin{equation}
\label{eq:K}
K(\mathbf{A}) = \max_{i<j<k} \min \left\{ \left| 1- \frac{a_{ik}}{a_{ij}a_{jk}}\right| , \left| 1 - \frac{a_{ij}a_{jk}}{a_{ik}}  \right| \right\} \, .
\end{equation}
.

Overall, a large number of inconsistency indices has been proposed in the literature. A full discussion would go beyond the scope of this paper, and the reader may refer to more specific studies \citep{Brunelli2018,PantEtAl2022}. We only emphasize that inconsistency indices are important not merely as descriptive tools: { their minimization is often used as a criterion for the completion of incomplete matrices and for the revision of judgments that are considered excessively inconsistent \citep{ShiraishiObataDaigo1998,BozokiEtAl2010,Mazurek2023}}.

\section{Optimal transport theory}
\label{sec:OT}

Given $n$ supply nodes and $m$ demand nodes, with supplies $s_{1},\ldots,s_{n} > 0$ and demands $d_1 , \ldots, d_m > 0$, a common problem in operational research consists in the minimization of the transportation cost on the complete bipartite graph with edges $\{ i,j \}$ for all $i=1,\ldots,n$ and $j=1,\ldots,m$. When $\sum_{i}s_{i} = \sum_{j}d_{j}$, the transportation problem is \emph{balanced} and can be written as
\begin{equation}
\tag{OT}
\label{eq:transport}
\begin{aligned}
\minimize_{\mathbf{X} \in \mathbb{R}^{n \times m}} 
& \quad \sum_{i=1}^n \sum_{j=1}^m c_{ij} x_{ij} \\
\text{subject to}
& \quad \sum_{j=1}^m x_{ij} = s_i \quad \forall i = 1,\dots,n, \\
& \quad \sum_{i=1}^n x_{ij} = d_j \quad \forall j = 1,\dots,m, \\
& \quad x_{ij} \ge 0 \quad \forall i,j.
\end{aligned}
\end{equation}
where $c_{ij} \in \mathbb{R}$ is the cost of transporting one unit on the edge $\{ i,j \}$. Such costs can be arranged into a cost matrix $\mathbf{C}=(c_{ij})_{n \times m}$.

\begin{note}
If the values $s_i$ and $d_j$ are normalized so that $\sum_{i}s_{i} = \sum_{j}d_{j} = 1$, then the problem can be interpreted as the optimal transport of one discrete probability distribution into another with minimal cost. This is the most common interpretation, in which the objective function represents the expected transportation cost under a coupling with fixed marginals. Figure \ref{fig:ot} presents a visual representation of the discrete optimal transport problem applied to probability distributions with $n=5$ and $m=6$.
\begin{figure}[h]
    \centering
    \includegraphics[height=6cm]{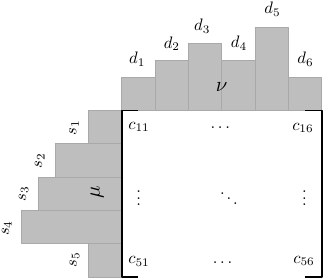}
    \caption{Example of discrete optimal transport with two fixed marginal probability distributions $\mu$ and $\nu$, with $n=5$ and $m=6$.}
 \label{fig:ot}
\end{figure}
Extensions of the transportation problem have also been proposed for non-additive measures \citep{Torra2023} and for nonlinear expectation \citep{LiuEtAl2025,LORENZINI2025}.
\end{note}

The optimization problem \eqref{eq:transport} is a linear program and, according to \citet{Solomon2018}, ``is the most obvious way to make optimal transport work in a discrete context''. However, it typically has sparse solutions and may become computationally demanding at large scale. As recalled by \citet{Solomon2017}, referring to supply and demand nodes, ``modern applications must cope with thousands or millions of these at a time''. Additionally, the problem is non-smooth, which is problematic for optimization and learning.

A balanced transportation problem forms a complete bipartite graph with $|V| = m+n$ nodes and $|E| = mn$ edges. As discussed in \citet{PeyreCuturi2019}, the classical transportation problem can be solved exactly using combinatorial algorithms such as the Goldberg--Tarjan cost-scaling algorithm or the network simplex. The polynomial algorithm by \citet{Orlin1993} has worst-case complexity
\begin{equation}
\label{eq:complexityOT}
\mathcal{O}\left(
|E|\log |V|
\left(|E|+|V|\log |V|\right)
\right).
\end{equation}
The network simplex algorithm, while often faster in practice, has no known polynomial worst-case bound and can exhibit exponential behavior, particularly on dense transportation problems.
For this reason, one may prefer the entropy-regularized optimal transport problem,
\begin{equation}
\tag{E-OT$_{\alpha}$}
\label{eq:transportR}
\begin{aligned}
\minimize_{ \mathbf{X} \in \mathbb{R}_{>}^{n \times m}} 
& \quad \sum_{i=1}^n \sum_{j=1}^m \big( \overbrace{c_{ij} x_{ij}}^{\text{cost}} + \alpha \overbrace{x_{ij} \log x_{ij} }^{\text{regularization}} \big) \\
\text{subject to}
& \quad \sum_{j=1}^m x_{ij} = s_i \quad \forall i = 1,\dots,n, \\
& \quad \sum_{i=1}^n x_{ij} = d_j \quad \forall j = 1,\dots,m,
\end{aligned}
\end{equation}
where $\alpha>0$ is the regularization parameter. The first term measures the transportation cost according to the cost matrix $\mathbf{C}$. The second term corresponds, up to an additive constant when the total transported mass is fixed, to the negative Shannon entropy of the transport plan $\mathbf{X}$ and penalizes highly concentrated couplings.

Now consider the transformation
\begin{equation}
\label{eq:f_alfa}
a_{ij}=f_{\alpha}(c_{ij})=\exp\left(-\frac{c_{ij}}{\alpha}\right).
\end{equation}
Then the objective function in \eqref{eq:transportR} can be rewritten as
\begin{align*}
\sum_{i=1}^n \sum_{j=1}^m \big(c_{ij} x_{ij}+\alpha x_{ij} \log x_{ij}\big)
&= \alpha\sum_{i=1}^n \sum_{j=1}^m x_{ij}\left(\frac{c_{ij}}{\alpha}+\log x_{ij}\right) \\
&= \alpha\sum_{i=1}^n \sum_{j=1}^m x_{ij}\log\frac{x_{ij}}{e^{-c_{ij}/\alpha}} \\
&= \alpha\sum_{i=1}^n \sum_{j=1}^m x_{ij}\log\frac{x_{ij}}{a_{ij}} \\
&= \alpha \, KL(\mathbf{X}||\mathbf{A})
\end{align*}
Thus, once the marginal constraints are fixed, minimizing the regularized objective is equivalent to minimizing the Kullback–Leibler divergence $\mathrm{KL}(\mathbf{X}\|\mathbf{A})$, since the two objectives differ only
by a positive multiplicative factor and an additive constant. The parameter $\alpha > 0$ controls the strength of the regularization: small $\alpha$ yields solutions closer to classical optimal transport, whereas large $\alpha$ produces smoother and more diffuse transport plans.

Under standard assumptions, as $\alpha \to 0$ the entropy-regularized problem converges to the unregularized Kantorovich formulation. Moreover, the regularized problem admits a unique minimizer, which is positive and has the prescribed marginals. This minimizer is diagonally equivalent to the positive kernel $\mathbf{A}$ and can be computed by Sinkhorn scaling.

\begin{theorem}[\citet{Sinkhorn1964}] \label{SinkhornTh}
Given $\mathbf{A} \in \mathbb{R}_{>}^{n \times n}$, there exist diagonal matrices $\mathbf{D}_1$ and $\mathbf{D}_2$ with strictly positive diagonal entries such that $\mathbf{S} = \mathbf{D}_1 \mathbf{A} \mathbf{D}_2$ is doubly stochastic. The matrices $\mathbf{D}_1$ and $\mathbf{D}_2$ are unique up to a common scalar factor.
\end{theorem}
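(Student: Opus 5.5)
We will prove existence by a variational argument tied to the entropy-regularized problem discussed above, and uniqueness by a strict-convexity/log-linearization argument. Write $\mathbf{D}_1=\mathrm{diag}(e^{u_1},\ldots,e^{u_n})$ and $\mathbf{D}_2=\mathrm{diag}(e^{v_1},\ldots,e^{v_n})$. The requirement that $\mathbf{S}=\mathbf{D}_1\mathbf{A}\mathbf{D}_2$ be doubly stochastic reads
\[
\sum_{j} a_{ij}e^{u_i+v_j}=1 \quad \forall i, \qquad \sum_{i} a_{ij}e^{u_i+v_j}=1 \quad \forall j .
\]
These are exactly the stationarity conditions of the concave dual function
\[
\Phi(\mathbf{u},\mathbf{v})=\sum_i u_i+\sum_j v_j-\sum_{i,j} a_{ij}e^{u_i+v_j},
\]
the Lagrangian dual of minimizing $KL(\mathbf{X}\|\mathbf{A})$ (in the generalized form $\sum x_{ij}\log(x_{ij}/a_{ij})-x_{ij}$) subject to unit row and column sums. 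So the plan is to show that $\Phi$ attains its maximum on $\mathbb{R}^{2n}$.

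For existence we note that $\Phi$ is invariant under $(\mathbf{u},\mathbf{v})\mapsto(\mathbf{u}+t\mathbf{1},\mathbf{v}-t\mathbf{1})$, so we restrict to the hyperplane $\sum_i u_i=\sum_j v_j$ and prove that $-\Phi$ is coercive there. This is the main obstacle. Let $a_{\min}=\min a_{ij}>0$. Then
\[
\Phi\le \sum_i u_i+\sum_j v_j - a_{\min}\sum_{i,j}e^{u_i+v_j}.
\]
If $\|(\mathbf{u},\mathbf{v})\|\to\infty$ on the hyperplane, either some $u_i+v_j\to+\infty$, in which case the exponential term dominates the linear part, or all $u_i+v_j$ stay bounded above. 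In the latter case $\sum_i u_i+\sum_j v_j=\frac1n\sum_{i,j}(u_i+v_j)$, and since the coordinates diverge while the hyperplane constraint holds, some pair sum must tend to $-\infty$. Each pair sum is bounded above, so the linear part tends to $-\infty$. Thus $\Phi\to-\infty$ in every case, a maximizer exists by continuity, and its first-order conditions give the required $\mathbf{D}_1$ and $\mathbf{D}_2$. As a cross-check, one could instead argue through the Sinkhorn--Knopp iteration: alternating normalization is coordinate ascent on $\Phi$, and positivity of $\mathbf{A}$ keeps the scalings in a compact range. The direct route, however, is cleaner.

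For uniqueness, suppose $\mathbf{D}_1\mathbf{A}\mathbf{D}_2$ and $\mathbf{D}_1'\mathbf{A}\mathbf{D}_2'$ are both doubly stochastic. Setting $\mathbf{P}=\mathbf{D}_1'\mathbf{D}_1^{-1}=\mathrm{diag}(p_i)$ and $\mathbf{Q}=\mathbf{D}_2'\mathbf{D}_2^{-1}=\mathrm{diag}(q_j)$, the matrix $\mathbf{S}$ and $\mathbf{P}\mathbf{S}\mathbf{Q}$ are both doubly stochastic. Let $p_{\max}=p_k$ and $q_{\min}=q_l$. The row condition gives $1=\sum_j s_{kj}p_kq_j\ge p_kq_{\min}$, and the column condition at $l$ gives $1=\sum_i s_{il}p_iq_l\le p_{\max}q_l$. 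Hence $p_{\max}q_{\min}=1$. Equality in the first inequality, together with $s_{kj}>0$ for all $j$, forces $q_j=q_{\min}$ for all $j$, so $\mathbf{Q}=c\mathbf{I}$. The row conditions then give $p_i=1/c$ for all $i$. Therefore $\mathbf{D}_1'=c^{-1}\mathbf{D}_1$ and $\mathbf{D}_2'=c\mathbf{D}_2$, which is uniqueness up to a common scalar factor, in the sense of reciprocal rescaling of the two factors, and the matrix $\mathbf{S}$ itself is unique.
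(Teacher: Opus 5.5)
The paper does not prove this theorem; it imports it from \citet{Sinkhorn1964}, where existence is obtained through the alternating row/column normalization itself (the procedure of Algorithm~1). Your proof is correct and takes a different, variational route to existence. Your uniqueness argument (the squeeze $p_{\max}q_{\min}=1$ plus the equality case using $s_{kj}>0$) is complete and elementary. Note that it is not the strict-convexity argument your first sentence announces, but it does not need to be.

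Three points in the existence step should be made explicit:
\begin{itemize}
\item The claim that some pair sum must tend to $-\infty$ rests on the map $(\mathbf u,\mathbf v)\mapsto(u_i+v_j)_{i,j}$ having kernel spanned by $(\mathbf 1,-\mathbf 1)$, which meets the hyperplane $\sum_i u_i=\sum_j v_j$ only at the origin.
\item The case split should be made along subsequences. Alternatively, replace it by the single bound $\Phi\le\sum_{i,j}h(u_i+v_j)$ with $h(s)=s/n-a_{\min}e^{s}$, which is bounded above and tends to $-\infty$ at both ends.
\item By the invariance of $\Phi$, the maximizer on the hyperplane is a global maximizer on $\mathbb{R}^{2n}$, so its full gradient vanishes and the first-order conditions are exactly the doubly stochastic conditions.
\end{itemize}

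As for what each route buys: yours ties existence directly to the entropy-regularized problem of Section~\ref{sec:OT}. The maximizer gives $x_{ij}=a_{ij}e^{u_i+v_j}$, the KL projection of $\mathbf A$ onto doubly stochastic matrices, so Proposition~\ref{prop:SinkhornOTS} comes essentially for free. It also extends to arbitrary positive marginals with equal totals, after replacing the linear term by $\sum_i s_iu_i+\sum_j d_jv_j$.

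It says nothing, however, about convergence of Algorithm~1, which is what Sinkhorn's iterative route delivers; your coordinate-ascent remark would need its own argument. Finally, both halves of your proof use strict positivity essentially: $a_{\min}>0$ for coercivity and $s_{kj}>0$ for the equality case. This is exactly why the nonnegative setting of Theorem~\ref{SinkhornKnopp} needs total support and full indecomposability.
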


\begin{algorithm}
\caption{Sinkhorn--Knopp Algorithm for Doubly Stochastic Scaling}
\begin{algorithmic}[1]
\Require Positive matrix $\mathbf{A} \in \mathbb{R}_{>}^{n \times n}$, tolerance $\varepsilon>0$, max iterations $K$
\Ensure Doubly stochastic matrix $\mathbf{S} = \operatorname{diag}(r_1,\dots,r_n) \, \mathbf{A} \, \operatorname{diag}(c_1,\dots,c_n)$, scaling vectors $\mathbf{r}, \mathbf{c}$

\State $\mathbf{r} \gets \mathbf{1}_n, \ \mathbf{c} \gets \mathbf{1}_n, \ \mathbf{S} \gets \mathbf{A}$
\For{$k = 1$ to $K$}
        \State $c_j \gets 1 / \sum_i A_{ij} r_i, \ j=1,\dots,n$
        \State $r_i \gets 1 / \sum_j A_{ij} c_j, \ i=1,\dots,n$
    \State $\mathbf{S} \gets \operatorname{diag}(r_1,\dots,r_n) \, \mathbf{A} \, \operatorname{diag}(c_1,\dots,c_n)$
    \If{$\|\mathbf{S}\mathbf{1}_n - \mathbf{1}_n\|_\infty < \varepsilon$ and $\|\mathbf{S}^\top \mathbf{1}_n - \mathbf{1}_n\|_\infty < \varepsilon$} 
        \State \textbf{break}
    \EndIf
\EndFor
\State \Return $\mathbf{S}, \mathbf{r}, \mathbf{c}$
\end{algorithmic}
\end{algorithm}

{ The Sinkhorn factorization can be obtained numerically by means of the Sinkhorn--Knopp algorithm presented in Algorithm 1, which can be interpreted as an alternating normalization of rows and columns}. More generally, it applies to nonnegative matrices with suitable support properties. If $\mathbf{A}$ is a nonnegative square matrix, it is said to have total support if $\mathbf{A}\ne \mathbf{0}$ and every positive entry of $\mathbf{A}$ lies on a positive diagonal. A nonnegative matrix that contains a positive diagonal is said to have support.

\begin{theorem}[\citet{SinkhornKnopp1967}]\label{SinkhornKnopp}
Let $\mathbf{A} \in \mathbb{R}^{n \times n}_{\geq}$. A necessary and sufficient condition for the existence of a doubly stochastic matrix $\mathbf{S}$ of the form $\mathbf{D}_1\mathbf{A}\mathbf{D}_2$, where $\mathbf{D}_1$ and $\mathbf{D}_2$ are diagonal matrices with positive diagonal entries, is that $\mathbf{A}$ has total support. If $\mathbf{S}$ exists, then it is unique. Also, $\mathbf{D}_1$ and $\mathbf{D}_2$ are unique up to a scalar multiple if and only if $\mathbf{A}$ is fully indecomposable.

A necessary and sufficient condition for the iterative process of alternately normalizing the rows and columns of $\mathbf{A}$ to converge to a doubly stochastic limit is that $\mathbf{A}$ has support. If $\mathbf{A}$ has total support, this limit is the matrix $\mathbf{D}_1\mathbf{A}\mathbf{D}_2$. If $\mathbf{A}$ has support that is not total, this limit cannot be of the form $\mathbf{D}_1\mathbf{A}\mathbf{D}_2$.
\end{theorem}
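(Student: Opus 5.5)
The statement just given is the Sinkhorn--Knopp theorem for nonnegative matrices, so I would prove it by combining a variational (entropy / KL) argument for existence with a permanent/diagonal-product argument for uniqueness and for the convergence claims. The first step is combinatorial. If $\mathbf{S}=\mathbf{D}_1\mathbf{A}\mathbf{D}_2$ is doubly stochastic, then $\mathbf{S}$ has the same zero pattern as $\mathbf{A}$. By Birkhoff's theorem, $\mathbf{S}$ is a convex combination of permutation matrices whose supports lie in the support of $\mathbf{S}$. Hence every positive entry of $\mathbf{S}$, and so of $\mathbf{A}$, lies on a positive diagonal. This is total support, and it gives necessity.

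For sufficiency I would use the KL formulation recalled just before the statement. I would minimize $KL(\mathbf{X}\|\mathbf{A})=\sum_{a_{ij}>0} x_{ij}\log(x_{ij}/a_{ij})$ over doubly stochastic $\mathbf{X}$ with support contained in that of $\mathbf{A}$. Total support guarantees a feasible point with the \emph{same} support as $\mathbf{A}$: take the average of the permutation matrices corresponding to positive diagonals covering all positive entries. The feasible set is compact and the objective is continuous and strictly convex, so a unique minimizer exists. The main obstacle is showing that the minimizer has full support on the pattern of $\mathbf{A}$. For this I would use the infinite slope of $x\log x$ at $0$: moving a small amount toward the full-support feasible point strictly decreases the objective if some entry vanishes. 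Once the minimizer is interior relative to the pattern, the Lagrange conditions give $\log(x_{ij}/a_{ij})+1=u_i+v_j$. Thus $x_{ij}=e^{u_i}a_{ij}e^{v_j}$, which is the required form $\mathbf{D}_1\mathbf{A}\mathbf{D}_2$.

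Uniqueness of $\mathbf{S}$ follows from a convexity argument. Any doubly stochastic scaling $\mathbf{D}_1\mathbf{A}\mathbf{D}_2$ satisfies the KKT conditions above, hence is a minimizer of the strictly convex problem, hence coincides with the unique minimizer. For the scaling factors, suppose $\mathbf{D}_1\mathbf{A}\mathbf{D}_2=\mathbf{D}_1'\mathbf{A}\mathbf{D}_2'$. Writing $x_i=d'_{1,i}/d_{1,i}$ and $y_j=d'_{2,j}/d_{2,j}$, we get $x_iy_j=1$ whenever $a_{ij}>0$. Consider the bipartite graph of the pattern of $\mathbf{A}$. The factors are forced to agree up to a common scalar exactly when this graph is connected. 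For a matrix with total support, connectivity is equivalent to full indecomposability; this is where total support is used, since otherwise connected matrices need not be fully indecomposable. If the matrix is decomposable, it is a direct sum up to permutations, and independent rescaling of the blocks breaks uniqueness.

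For the iterative statement, I would track the permanent-type potential $\prod_i r_i\prod_j c_j$, equivalently the product over any positive diagonal. By the AM--GM inequality, each normalization step increases a fixed diagonal product of $\mathbf{S}$. This product is bounded above by $1$ for doubly stochastic-like matrices, which gives convergence of row and column sums to $1$ whenever a positive diagonal exists, that is, whenever $\mathbf{A}$ has support. Conversely, without support, König's theorem yields a zero $p\times q$ block with $p+q>n$. That block is incompatible with any doubly stochastic limit, because the limit's pattern is contained in the pattern of $\mathbf{A}$. When $\mathbf{A}$ has support but not total support, entries lying on no positive diagonal must tend to $0$ by the Birkhoff argument above. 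The limit then has strictly smaller support, so it cannot be of the form $\mathbf{D}_1\mathbf{A}\mathbf{D}_2$. When $\mathbf{A}$ has total support, every limit point is a doubly stochastic matrix obtained as a limit of scalings of $\mathbf{A}$. To conclude that it equals $\mathbf{S}$ I would use the KL monotonicity of alternating projections (Csiszár's I-projection). I expect this limit identification, together with the positivity of the minimizer, to be the delicate part.
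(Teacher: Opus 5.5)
The paper gives no proof of this theorem; it is quoted from \citet{SinkhornKnopp1967}, so your proposal has to stand on its own. The static half is sound. Birkhoff gives necessity of total support. The KL minimization over doubly stochastic matrices supported in $\operatorname{supp}\mathbf{A}$, together with the infinite-slope argument and the Lagrange conditions, gives existence. KKT sufficiency for a convex problem gives uniqueness of $\mathbf{S}$, and bipartite connectivity handles the factors. You only assert that, under total support, connectivity is equivalent to full indecomposability, but the nontrivial direction takes two lines. Suppose some $\mathbf{P}\mathbf{A}\mathbf{Q}$ has a zero $k\times(n-k)$ upper-right block. Then the first $k$ rows of $\mathbf{P}\mathbf{S}\mathbf{Q}$ put all their mass $k$ into the first $k$ columns, whose total mass is $k$. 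So the lower-left block vanishes as well, and the bipartite graph is disconnected.

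The genuine gap is in the iterative half. Your AM--GM/diagonal-product argument shows only that the row and column sums of the iterates $\mathbf{X}^{(k)}$ tend to $1$, i.e.\ that every limit point of this bounded sequence is doubly stochastic. That is not convergence: a bounded sequence whose limit points all lie in the polytope of doubly stochastic matrices supported in $\operatorname{supp}\mathbf{A}$ may still oscillate. Yet ``support $\Rightarrow$ convergence'' is exactly what the statement asserts. You address the identification of the limit only for total support, and only by a pointer to Csisz\'ar. For support that is not total, you argue that the limit has strictly smaller support, which presupposes that a limit exists. The entries lying on no positive diagonal do tend to $0$, as you say, but nothing in the proposal controls the remaining entries.

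You can fill the gap with your own potential. Let $\mathbf{A}'$ be $\mathbf{A}$ with the entries lying on no positive diagonal set to zero. It has the same positive diagonals as $\mathbf{A}$, hence total support, and so a scaling $\mathbf{S}'$ with $\operatorname{supp}\mathbf{S}'=\operatorname{supp}\mathbf{A}'$.

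Take any doubly stochastic $\mathbf{Q}$ supported in $\operatorname{supp}\mathbf{A}$. A row step $x_{ij}\mapsto x_{ij}/r_i$ with $\sum_i r_i=n$ changes $\sum_{i,j}q_{ij}\log(q_{ij}/x_{ij})$ by $\sum_i\log r_i\le 0$, and similarly for column steps. Your diagonal product is the special case where $\mathbf{Q}$ is a permutation matrix.

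Now take $\mathbf{Q}=\mathbf{S}'$, and use $x^{(k)}_{ij}\le 1$ after the first step. The bound on this divergence then gives a uniform positive lower bound on $x^{(k)}_{ij}$ over $\operatorname{supp}\mathbf{A}'$. So every limit point $\mathbf{X}^{*}$ has support containing $\operatorname{supp}\mathbf{A}'$; Birkhoff gives the reverse inclusion, so the support is exactly $\operatorname{supp}\mathbf{A}'$.

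On that support, $\log x^{(k)}_{ij}-\log a_{ij}$ lies in the closed subspace $\{(\alpha_i+\beta_j)_{ij}\}$, hence so does its limit along a convergent subsequence. This means $\mathbf{X}^{*}=\mathbf{D}_1^{*}\mathbf{A}'\mathbf{D}_2^{*}$, so $\mathbf{X}^{*}=\mathbf{S}'$ by the uniqueness you already proved. All limit points coincide, so the iteration converges to $\mathbf{S}'$. The case $\mathbf{A}'=\mathbf{A}$ is precisely the total-support identification you left open, and it does not need Csisz\'ar's general theorem.
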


For the purposes of this paper, the most relevant case is the square one with equal row and column marginals. In that setting, the regularized transport plan can be obtained through diagonal scaling of the positive kernel $\mathbf{A}$, and in the special case of unit marginals it becomes a doubly stochastic matrix. A more precise formulation of this connection will be given in Proposition~\ref{prop:SinkhornOTS}.

Adding an entropic regularization term transforms the problem into a
smooth, strictly convex optimization problem that can be solved
efficiently using Sinkhorn iterations \citep{PeyreCuturi2019}. Each
iteration requires $\mathcal{O}(mn)$ arithmetic operations, and the
Sinkhorn scaling can be approximated to any prescribed accuracy in
polynomial time \citep{AltschulerEtAl2017}. This computational
tractability is one of the reasons why entropy-regularized optimal
transport is widely used for large dense problems.

\section{Bridging the two theories}
\label{sec:bridge}

We are ready to examine the parallel between pairwise comparison matrices and the optimal transport theory. First, we establish a connection between the priority vector $\mathbf{w}$ and the Sinkhorn theorem.  We do so by proposing  a new characterization of consistent pairwise comparison matrices obtained using Sinkhorn theorem.




{

\begin{proposition}
\label{prop:main}
Let $\mathbf A\in\mathcal A$, and let $\mathbf S=\mathbf D_1\mathbf A\mathbf D_2$, be its Sinkhorn scaling. Then the following conditions are equivalent:
\begin{enumerate}
\item $\mathbf A\in\mathcal A^*$;
\item $\mathbf S=\frac1n\mathbf 1\mathbf 1^\top$;
\item the vector $\operatorname{diag}(\mathbf{D}_2) = (d^{(2)}_1,\ldots,d^{(2)}_n)$
is compatible with $\mathbf A$, that is, $a_{ij}={d^{(2)}_i}/{d^{(2)}_j}
\; \forall i,j$;
\item the componentwise inverse of
$\operatorname{diag}(\mathbf{D}_1)=(d^{(1)}_1,\ldots,d^{(1)}_n)$
is compatible with $\mathbf A$, that is, $a_{ij}
=
{d^{(1)}_j}/{d^{(1)}_i}
\; \forall i,j$.
\end{enumerate}
\end{proposition}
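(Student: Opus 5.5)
The plan is to show that (1) implies each of (2), (3), (4) by writing down an explicit Sinkhorn scaling for a consistent matrix and invoking the uniqueness part of Theorem~\ref{SinkhornTh}. For the converse directions, (3) and (4) follow straight from the definition of consistency, and (2) follows from a rank argument. Since $\mathbf A\in\mathcal A$ has strictly positive entries, Theorem~\ref{SinkhornTh} applies. It gives that $\mathbf S$ is uniquely determined and that $\mathbf D_1,\mathbf D_2$ are unique up to the rescaling $(\mathbf D_1,\mathbf D_2)\mapsto(t\mathbf D_1,t^{-1}\mathbf D_2)$, $t>0$. Because the compatibility conditions in (3) and (4) involve only ratios of diagonal entries, they are invariant under this rescaling. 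Hence (3) and (4) are well posed regardless of which representative pair is chosen.

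\emph{(1) $\Rightarrow$ (2),(3),(4).} Suppose $a_{ij}=w_i/w_j$ for some positive $\mathbf w$. Take $\mathbf D_1=\operatorname{diag}\bigl(1/(nw_1),\ldots,1/(nw_n)\bigr)$ and $\mathbf D_2=\operatorname{diag}(w_1,\ldots,w_n)$. Then
\[
(\mathbf D_1\mathbf A\mathbf D_2)_{ij}=\frac{1}{nw_i}\cdot\frac{w_i}{w_j}\cdot w_j=\frac1n .
\]
This matrix is doubly stochastic, so by uniqueness $\mathbf S=\frac1n\mathbf 1\mathbf 1^\top$. Moreover, every admissible pair satisfies $d^{(2)}_i=s\,w_i$ and $d^{(1)}_i=1/(s\,n\,w_i)$ for some $s>0$. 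From these expressions, $d^{(2)}_i/d^{(2)}_j=w_i/w_j=a_{ij}$ and $d^{(1)}_j/d^{(1)}_i=w_i/w_j=a_{ij}$, which are (3) and (4).

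\emph{(3) $\Rightarrow$ (1) and (4) $\Rightarrow$ (1).} These are immediate from the definition of consistency. In (3) the vector $\operatorname{diag}(\mathbf D_2)$ is itself a compatible vector. In (4) the vector $(1/d^{(1)}_1,\ldots,1/d^{(1)}_n)$ is compatible, because $a_{ij}=d^{(1)}_j/d^{(1)}_i=(1/d^{(1)}_i)/(1/d^{(1)}_j)$.

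\emph{(2) $\Rightarrow$ (1).} If $\mathbf S=\frac1n\mathbf 1\mathbf 1^\top$, then
\[
\mathbf A=\mathbf D_1^{-1}\mathbf S\,\mathbf D_2^{-1}=\tfrac1n\,\mathbf u\mathbf v^\top,\qquad u_i=1/d^{(1)}_i,\; v_j=1/d^{(2)}_j .
\]
So $\operatorname{rank}(\mathbf A)=1$, which the paper notes is equivalent to consistency. To make this explicit, $a_{ii}=1$ gives $u_iv_i=n$, hence $v_j=n/u_j$ and $a_{ij}=u_i/u_j$.

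None of these steps is really hard. The only point requiring care is the bookkeeping in (1) $\Rightarrow$ (3),(4): one has to use the uniqueness statement of Theorem~\ref{SinkhornTh} correctly, including its scalar ambiguity. The explicit scaling exhibited is one admissible pair, and the actual $\mathbf D_1,\mathbf D_2$ may differ from it by that factor, which is harmless because it cancels in the ratios.
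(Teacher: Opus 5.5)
Your proof is correct, but it is organized differently from the paper's. The paper proves the cycle (1)$\Rightarrow$(2)$\Rightarrow$(3)$\Rightarrow$(4)$\Rightarrow$(1). You instead prove every condition against (1): the forward directions come from the explicit scaling, and the converses come from the definition of consistency or the rank-one factorization. Your (1)$\Rightarrow$(2) is the paper's step ($\mathbf D_1=\frac1n\mathbf W^{-1}$, $\mathbf D_2=\mathbf W$), except that you state the appeal to uniqueness of $\mathbf S$, which the paper leaves tacit.

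The real difference is how you reach (3) and (4). You derive them from (1) using the uniqueness of $(\mathbf D_1,\mathbf D_2)$ up to $(t\mathbf D_1,t^{-1}\mathbf D_2)$. The paper derives them from (2) by entrywise identities. First, $a_{ii}=1$ forces $n\,d^{(1)}_i d^{(2)}_i=1$. Then the unit row sums of $\mathbf S$ carry compatibility from $\operatorname{diag}(\mathbf D_2)$ over to the inverse of $\operatorname{diag}(\mathbf D_1)$. These identities hold for any pair of factors that realizes the uniform core, so the paper needs uniqueness only of $\mathbf S$, not of the factors.

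Your own (2)$\Rightarrow$(1) computation already does this. From $u_iv_i=n$ you get $a_{ij}=u_i/u_j=v_j/v_i$, and these two ratios are exactly (4) and (3). So you could have reached (3) and (4) through (2) without the factor-uniqueness bookkeeping you identify as the delicate point. In exchange, your layout makes each converse a one-liner, and it explains why (3) and (4) are well posed despite the scalar ambiguity, which the paper never addresses.
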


}


{
Thus, we know that for a consistent pairwise comparison matrix (1.), the core doubly stochastic matrix $\mathbf{S}$ has all entries equal to $1/n$ (2.), one can find a compatible weight vector $\mathbf{w}$ in the diagonal of $\mathbf{D}_2$ (3.) or, equivalently, up to component-wise inversion, in the diagonal of $\mathbf{D}_1$ (4.). All these observations are crucial for the estimations of priorities and inconsistency.
}

Let us underline that $\mathbf{A} \in \mathcal{A}$ has total support, hence it satisfies the condition posed by Theorem \ref{SinkhornKnopp} and we can use the Sinkhorn--Knopp algorithm (Algorithm 1) to compute the Sinkhorn factorization.

\begin{example}
    Consider the consistent pairwise comparison matrix
\begin{equation}
\label{eq:A3x3}
    \mathbf{A}=\begin{pmatrix}
    1 & 6 & 3 \\
    \frac{1}{6} & 1 & \frac{1}{2} \\
    \frac{1}{3} & 2 & 1 
\end{pmatrix} \in \mathcal{A}^{*}
\end{equation}
    The Sinkhorn--Knopp algorithm scales it into
\[
\overbrace{\begin{pmatrix}
    \frac{1}{10} & 0 & 0 \\
    0 & \frac{6}{10} & 0 \\
    0 & 0 & \frac{3}{10} 
\end{pmatrix}}^{\mathbf{D}_1}
\begin{pmatrix}
    1 & 6 & 3 \\
    \frac{1}{6} & 1 & \frac{1}{2} \\
    \frac{1}{3} & 2 & 1 
\end{pmatrix} 
\overbrace{\begin{pmatrix}
    \frac{30}{9} & 0 & 0 \\
    0 & \frac{5}{9} & 0 \\
    0 & 0 & \frac{10}{9} 
\end{pmatrix}}^{\mathbf{D}_2}
=\overbrace{\begin{pmatrix}
    \frac{1}{n} & \frac{1}{n} & \frac{1}{n} \\
    \frac{1}{n} & \frac{1}{n} & \frac{1}{n} \\
    \frac{1}{n} & \frac{1}{n} & \frac{1}{n} 
\end{pmatrix}}^{\mathbf{S}}
\]
where the diagonal of $\mathbf{D}_{2}$ represents a weight vector compatible with the entries of $\mathbf{A}$. An equivalent weight vector appears if we take the component-wise inverse of $\mathbf{D}_{1}$. Conversely, if we still consider $\mathbf{A}$ as in \eqref{eq:A3x3} and we made it inconsistent by setting $a_{13}=2$ and its reciprocal $a_{31}=1/2$, by running Algorithm 1 we obtain
\begin{align*}
\mathbf{D}_1 & =  \begin{pmatrix}
0.116792 & 0 & 0\\
0 & 0.612161 & 0\\
0 & 0 & 0.267386
\end{pmatrix} \\
\mathbf{S} & =
\begin{pmatrix}
0.331313 & 0.379259 & 0.289428 \\
0.289428 & 0.331313 & 0.379259 \\
0.379259 & 0.289428 & 0.331313 
\end{pmatrix} \\
\mathbf{D}_2
& =\begin{pmatrix}
    2.83679 & 0 & 0 \\
    0 & 0.541219 & 0 \\
    0 & 0 & 1.23908 
\end{pmatrix}
\end{align*}

\end{example}

{
 Interestingly, the core $\mathbf{S}$ of a matrix is not affected when the very same matrix is multiplied (through the Hadamard product) with a consistent one. This result is formalized in the following lemma where $\mathbf{S}(\mathbf{A})$ and $\mathbf{S}(\mathbf{B})$ denote the cores of $\mathbf{A}$ and $\mathbf{B}$, respectively.
\begin{lemma}
\label{lemma:invariance-consistent}
Let $\mathbf A\in\mathcal A$ and $\mathbf{A}^{*} \in \mathcal A^*$. Let also $\mathbf B=  \mathbf A \circ \mathbf{A}^{*}$, where $\circ$ denotes the Hadamard product. Then, $
\mathbf S(\mathbf B)=\mathbf S(\mathbf A)$.
\end{lemma}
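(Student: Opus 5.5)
The plan is to show that the Hadamard product with a consistent matrix is just a diagonal similarity-type scaling of $\mathbf A$. The result then follows from the uniqueness part of Theorem~\ref{SinkhornKnopp} (or Theorem~\ref{SinkhornTh}).

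First, since $\mathbf A^*\in\mathcal A^*$, there is a compatible positive vector $\mathbf w$ with $a^*_{ij}=w_i/w_j$. Setting $\mathbf W=\operatorname{diag}(w_1,\ldots,w_n)$, we get $\mathbf A^*=\mathbf W\mathbf 1\mathbf 1^\top\mathbf W^{-1}$. Entrywise, $b_{ij}=a_{ij}w_i/w_j$, so
\[
\mathbf B=\mathbf A\circ\mathbf A^*=\mathbf W\mathbf A\mathbf W^{-1}.
\]
Note that $\mathbf B$ is again a positive reciprocal matrix, so $\mathbf B\in\mathcal A$, and it has a Sinkhorn scaling by Theorem~\ref{SinkhornTh}.

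Next, let $\mathbf S(\mathbf A)=\mathbf D_1\mathbf A\mathbf D_2$ be the Sinkhorn scaling of $\mathbf A$. Then
\[
\mathbf S(\mathbf A)=(\mathbf D_1\mathbf W^{-1})\,\mathbf B\,(\mathbf W\mathbf D_2).
\]
The matrices $\mathbf D_1\mathbf W^{-1}$ and $\mathbf W\mathbf D_2$ are diagonal with strictly positive entries. So $\mathbf S(\mathbf A)$ is a doubly stochastic matrix of the form $\mathbf D_1'\mathbf B\mathbf D_2'$.

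Finally, $\mathbf B$ is strictly positive, so it has total support. The uniqueness clause of Theorem~\ref{SinkhornKnopp} then says the doubly stochastic matrix of the form $\mathbf D_1'\mathbf B\mathbf D_2'$ is unique. Hence $\mathbf S(\mathbf B)=\mathbf S(\mathbf A)$. Alternatively, Theorem~\ref{SinkhornTh} gives uniqueness of the scaling factors up to a common scalar, and a common scalar leaves the product unchanged. As a by-product, the scaling factors of $\mathbf B$ are $\mathbf D_1\mathbf W^{-1}$ and $\mathbf W\mathbf D_2$, up to a scalar.

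There is no real obstacle here. The only point needing care is to invoke uniqueness of $\mathbf S$ itself, not merely existence. Theorem~\ref{SinkhornKnopp} states this explicitly for matrices with total support, and every strictly positive matrix has total support.
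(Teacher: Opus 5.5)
Your proof is correct and follows the paper's argument. Like the paper, you write $\mathbf B=\mathbf W\mathbf A\mathbf W^{-1}$, rewrite $\mathbf D_1\mathbf A\mathbf D_2$ as $(\mathbf D_1\mathbf W^{-1})\,\mathbf B\,(\mathbf W\mathbf D_2)$, and conclude from the uniqueness of the doubly stochastic scaling, after explicitly checking that $\mathbf B$ is strictly positive and so has total support, a step the paper leaves implicit.
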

}

In the consistent case, any column of $\mathbf{A}$ can be taken as the priority vector $\mathbf{w}$ and most of the methods can simply lead to this trivial solution. For example, the characteristic equation of a consistent matrix becomes $\lambda^{\,n-1} (\lambda - n) = 0$, which leads immediately to $\lambda_{\max}=n$. Similarly, under consistency, the Sinkhorn--Knopp algorithm can find the weight vector in one iteration. Namely, in the consistent case, $K=1$ suffices in Algorithm 1.

\begin{proposition}
\label{prop:1step}
    If $\mathbf{A} \in \mathcal{A}^{*}$, then one iteration of the Sinkhorn--Knopp algorithm is sufficient to obtain the weight vector $\mathbf{w}$.
\end{proposition}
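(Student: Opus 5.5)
Since $\mathbf{A}\in\mathcal{A}^{*}$, there is a positive vector $\mathbf{w}$ with $a_{ij}=w_i/w_j$ for all $i,j$. The plan is to run one pass of Algorithm 1 explicitly, starting from $\mathbf{r}=\mathbf{c}=\mathbf{1}_n$, and check three things: the column update already returns a vector equivalent to $\mathbf{w}$; the row update then makes $\mathbf{S}$ exactly doubly stochastic; and the stopping test therefore fires at $k=1$.

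\emph{Column step.} With $r_i=1$ we get $\sum_i a_{ij}r_i=\sum_i w_i/w_j = W/w_j$, where $W=\sum_i w_i$. Hence the update gives $c_j = w_j/W$, so $\mathbf{c}$ is the normalized weight vector, compatible with $\mathbf{A}$.

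\emph{Row step.} Next, $\sum_j a_{ij}c_j=\sum_j (w_i/w_j)(w_j/W)= n w_i/W$, so the update gives $r_i = W/(n w_i)$. Then
\[
S_{ij}=r_i a_{ij} c_j=\frac{W}{n w_i}\cdot\frac{w_i}{w_j}\cdot\frac{w_j}{W}=\frac1n ,
\]
so $\mathbf{S}=\frac1n\mathbf{1}\mathbf{1}^\top$. This is exactly doubly stochastic, so both residuals in the stopping test equal $0<\varepsilon$ and the algorithm stops after the first iteration.

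\emph{Identification with the Sinkhorn factors.} By the uniqueness in Theorem \ref{SinkhornTh} (Theorem \ref{SinkhornKnopp}, as $\mathbf{A}$ is positive and hence fully indecomposable), $\operatorname{diag}(\mathbf{c})$ and $\operatorname{diag}(\mathbf{r})$ coincide with $\mathbf{D}_2$ and $\mathbf{D}_1$ up to a common scalar. This matches items 3 and 4 of Proposition \ref{prop:main}: $\mathbf{w}$ is read off from $\mathbf{c}$, or equivalently from the componentwise inverse of $\mathbf{r}$.

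The step needing most care is the interpretive claim that ``the weight vector is obtained''. The computation itself is routine. One must note that the output $\mathbf{c}$ is equivalent to, not equal to, any particular compatible $\mathbf{w}$, and that scaling ambiguity is harmless because compatible vectors are defined only up to $\beta>0$. It should also be stated that $\mathbf{c}$ is in fact already correct after the column half-step; the full iteration is needed only for the stopping criterion to certify it.
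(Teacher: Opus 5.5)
Your proof is correct and follows essentially the same route as the paper: an explicit one-pass computation giving $c_j = w_j/\sum_k w_k$, $r_i = (\sum_k w_k)/(n w_i)$ and hence $\mathbf{S}=\frac1n\mathbf{1}\mathbf{1}^\top$. Your extra remarks, on the stopping test firing and on identifying $\operatorname{diag}(\mathbf{r})$ and $\operatorname{diag}(\mathbf{c})$ with $\mathbf{D}_1$ and $\mathbf{D}_2$ via uniqueness, make explicit what the paper leaves implicit.
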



Note also that the prioritization method based on the Sinkhorn--Knopp algorithm offers an interpretation for the normalized columns sum method.


\begin{proposition}
\label{prop:first_iteration}
Let $\mathbf{A} \in \mathbb{R}_{>0}^{n \times n}$ and denote the column-normalized matrix  

\[
\tilde{\mathbf{A}} = \mathbf{A} \mathbf{D}_c^{(0)}, \quad 
\mathbf{D}_c^{(0)} = \operatorname{diag}\left(\frac{1}{\sum_i a_{i1}}, \dots, \frac{1}{\sum_i a_{in}}\right).
\]  

Let $w_i^{\mathrm{NCS}} = \sum_j \tilde a_{ij}$ be the \emph{normalized columns sum (NCS) weights}. Then, in the first iteration of the Sinkhorn--Knopp algorithm (columns first), the second column-scaling matrix $\mathbf{D}_2 = \mathbf{D}_c^{(1)}$ can be expressed as

\[
\mathbf{D}_2 = \operatorname{diag}\Bigg( \frac{1}{\sum_{i=1}^{n} \frac{\tilde a_{i1}}{w_i^{\mathrm{NCS}}}}, \ldots, \frac{1}{\sum_{i=1}^{n} \frac{\tilde a_{in}}{w_i^{\mathrm{NCS}}}} \Bigg).
\]  
\end{proposition}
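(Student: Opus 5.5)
The plan is to unroll the first pass of Algorithm~1 explicitly and rewrite each quantity in terms of $\tilde{\mathbf{A}}$ and $w^{\mathrm{NCS}}$. The algorithm initializes $\mathbf{r}=\mathbf{1}_n$, so the first column update gives $c^{(0)}_j = 1/\sum_i a_{ij}$, which is exactly the diagonal of $\mathbf{D}_c^{(0)}$. Hence $\mathbf{A}\mathbf{D}_c^{(0)}=\tilde{\mathbf{A}}$ is the column-normalized matrix. The subsequent row update is
\[
r_i = \frac{1}{\sum_j a_{ij} c^{(0)}_j} = \frac{1}{\sum_j \tilde a_{ij}} = \frac{1}{w_i^{\mathrm{NCS}}} ,
\]
so after one full column-then-row sweep the row scaling matrix is $\mathbf{D}_r^{(1)}=\operatorname{diag}(1/w_1^{\mathrm{NCS}},\ldots,1/w_n^{\mathrm{NCS}})$. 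This identification of the row scaling with the inverse NCS weights is the core of the statement.

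Next, I perform the column update that begins the following sweep, which the statement calls the second column-scaling matrix $\mathbf{D}_c^{(1)}$. Algorithm~1 prescribes $c^{(1)}_j = 1/\sum_i a_{ij} r_i$ with the $r_i$ just computed. The only step requiring care is how this update acts: it recomputes the column scaling against the original $\mathbf{A}$, rather than composing with $\mathbf{D}_c^{(0)}$. Substituting $r_i = 1/w_i^{\mathrm{NCS}}$ gives
\[
c^{(1)}_j = \frac{1}{\sum_i a_{ij}/w_i^{\mathrm{NCS}}} .
\]

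To reach the displayed form, I relate $a_{ij}$ to $\tilde a_{ij}$ through $a_{ij}=\tilde a_{ij}\sum_k a_{kj}$. This yields $c^{(1)}_j = c^{(0)}_j\big/\sum_i \tilde a_{ij}/w_i^{\mathrm{NCS}}$. Here I must fix a convention. If $\mathbf{D}_2$ denotes the column scaling applied to the already column-normalized matrix $\tilde{\mathbf{A}}$, i.e.\ the cumulative factor relative to $\tilde{\mathbf{A}}$ so that the current iterate is $\mathbf{D}_r^{(1)}\tilde{\mathbf{A}}\mathbf{D}_2$, then the factor $c^{(0)}_j$ is absorbed into $\tilde{\mathbf{A}}$. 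In that case $\mathbf{D}_2$ has exactly the stated diagonal entries $1/\sum_i \tilde a_{ij}/w_i^{\mathrm{NCS}}$. I expect this bookkeeping, namely stating clearly that the scaling is relative to $\tilde{\mathbf{A}}$ (equivalently, that column normalization is applied to $\mathbf{D}_r^{(1)}\tilde{\mathbf{A}}$), to be the main obstacle. Once the convention is set, the rest is a direct substitution.

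As a consistency check, I would use Proposition~\ref{prop:1step} and the consistent example. There $w^{\mathrm{NCS}}$ is proportional to the priority vector and $\tilde a_{ij}=w_i/\sum_k w_k$. Each $\sum_i \tilde a_{ij}/w_i^{\mathrm{NCS}}$ is then a constant, so $\mathbf{D}_2$ is a scalar matrix and the iterate is already doubly stochastic, in agreement with Proposition~\ref{prop:main}.
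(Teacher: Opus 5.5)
Your proposal is correct and follows the paper's proof: you identify $r_i = 1/w_i^{\mathrm{NCS}}$ and then take $\mathbf{D}_2$ to be the scaling that normalizes the columns of $\mathbf{D}_r^{(1)}\tilde{\mathbf{A}}$, which is exactly the convention the paper uses when it defines $\mathbf{D}_2$. Your remark that Algorithm~1's literal update, computed against $\mathbf{A}$, carries an extra factor $c_j^{(0)}$ is accurate, and it makes explicit a bookkeeping point the paper leaves implicit.
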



\subsection{Consistency}

At this point, a question could be whether this new view on pairwise comparison matrices represents a fertile ground for a new view on the quantification of inconsistency, too. Given the interpretation of $\mathbf{S}$ as a doubly stochastic matrix, we can consider $\mathbf{S}$ and define a new inconsistency index based on Shannon entropy.

\begin{definition}[Entropy-based inconsistency index] The entropy-based inconsistency of a pairwise comparison matrix $\mathbf{A} \in \mathcal{A}$ is defined as 
    \begin{equation}
I_H(\mathbf{A})  = 1 - \frac{ H(\mathbf{S})}{ n \log_{2}n }
\end{equation}
\end{definition}
with 
\begin{equation}
H(\mathbf{S}) = -\displaystyle \sum_{i=1}^{n} \sum_{j=1}^{n} {s_{ij}}\log_{2}{s_{ij}}
\end{equation}

The second term of $I_H$ corresponds to the normalized entropy of the matrix $\mathbf{S}$ and it is equal to $1$ if and only if $\mathbf{A}\in \mathcal{A}^{*}$ and positive otherwise. Thus, $I_{H}$ is normalized and it takes values in the interval $[0,1[$, which seems to be a property valued in the literature on pairwise comparisons \citep{KoczkodajEtAl2017}.

A natural question is whether $I_{H}$ satisfies some desirable properties. \citet{BrunelliFedrizzi2024} proposed a unifying property but checking whether it holds may be difficult, and it may be easier to consider, individually, the six properties studied by \citet{Brunelli2017}, supplemented
here by invariance under diagonal similarity.

\begin{itemize}
    \item[P1:] (\textit{Null inconsistency}) The inconsistency index is zero if and only if the pairwise comparison matrix $\mathbf{A}$ is consistent: $I(\mathbf{A}) = 0 \Leftrightarrow  \mathbf{A}\in \mathcal{A}^{*}$

    \item[P2:] (\textit{Permutation invariance}) 
    The inconsistency index does not change if the alternatives are permuted. That is, $I(\mathbf{P}\mathbf{A}\mathbf{P}^\top) = I(\mathbf{A})$ for any permutation matrix $\mathbf{P}$.

    \item [P3:]  (\textit{Monotonicity under power transformation})
    Amplifying the preferences of $\mathbf{A}$ preserves or increases their properties, including inconsistency. Formally, if we consider $\mathbf{A}^b = (a_{ij}^b)_{n \times n}$, then 
$b_2>b_1>0$ implies that $I_H(\mathbf A^{b_2})\ge I_H(\mathbf A^{b_1})$.

    \item [P4:]  (\textit{Monotonicity on single comparisons})
    Changing a single comparison of a consistent matrix away from its consistent value increases inconsistency. Consider $\mathbf A\in\mathcal A^*$ with $n\geq 3$ and then let $\mathbf A^{pq}(\delta)$ be obtained
from $\mathbf A$ by replacing $a_{pq}$ with $\delta a_{pq}$ and $a_{qp}$ with $\delta^{-1}a_{qp}$, leaving all other entries unchanged. Then the function $\delta\mapsto I_H(\mathbf A^{pq}(\delta))$ is quasi-convex on $\mathbb R_{>0}$ and has its unique global minimum at $\delta=1$.

    \item [P5:] (\textit{Continuity})
    The inconsistency index $I$ is a continuous function of the entries of $\mathbf{A}$.

    \item [P6:] (\textit{Transposition invariance})
    Inverting preference should not change their consistency; that is $I(\mathbf{A}^{\top}) = I(\mathbf{A})$.
    { \item [P7:] (\textit{Invariance under diagonal similarity}) Rescaling the alternatives by positive factors
    does not change the assessed inconsistency. Formally, for every diagonal matrix
    $\mathbf D = \mathrm{diag}(d_1,\dots,d_n)$, with $d_i>0$, we have $I(\mathbf D \mathbf A\mathbf D^{-1}) = I(\mathbf A)$.}
\end{itemize}

It can be shown that $I_{H}$ satisfies all seven properties. { This represents \textit{prima facie} evidence to support the validity of the inconsistency index $I_H$.}

\begin{proposition}
\label{prop:axioms_consistency}
    The inconsistency index $I_{H}$ satisfies properties P1--{P7}.
\end{proposition}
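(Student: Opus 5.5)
The plan is to handle the seven properties separately. Most follow from the uniqueness part of Theorem~\ref{SinkhornKnopp} together with Proposition~\ref{prop:main} and Lemma~\ref{lemma:invariance-consistent}. Throughout, $\mathbf S(\mathbf A)$ denotes the unique doubly stochastic matrix diagonally equivalent to $\mathbf A$. Since $I_H$ depends on $\mathbf A$ only through $H(\mathbf S(\mathbf A))$, each property reduces to a statement about the core.

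\textbf{P1.} Each row of $\mathbf S$ is a probability vector, so its entropy is at most $\log_2 n$, with equality iff the row is uniform. Summing over rows gives $H(\mathbf S)\le n\log_2 n$, with equality iff $\mathbf S=\frac1n\mathbf 1\mathbf 1^\top$. By Proposition~\ref{prop:main} this happens iff $\mathbf A\in\mathcal A^*$, so $I_H(\mathbf A)=0$ iff $\mathbf A$ is consistent, and $I_H>0$ otherwise.

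\textbf{P2 and P6.} If $\mathbf S=\mathbf D_1\mathbf A\mathbf D_2$, then
\[
\mathbf P\mathbf S\mathbf P^\top=(\mathbf P\mathbf D_1\mathbf P^\top)(\mathbf P\mathbf A\mathbf P^\top)(\mathbf P\mathbf D_2\mathbf P^\top)
\quad\text{and}\quad
\mathbf S^\top=\mathbf D_2\mathbf A^\top\mathbf D_1 .
\]
Both are doubly stochastic diagonal scalings of $\mathbf P\mathbf A\mathbf P^\top$ and of $\mathbf A^\top$, respectively. By uniqueness they are the cores of those matrices. Permuting or transposing the entries does not change $H$.

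\textbf{P7.} Note that $\mathbf D\mathbf A\mathbf D^{-1}=\mathbf A\circ\mathbf A^*$ with $a^*_{ij}=d_i/d_j$, and $\mathbf A^*$ is consistent. Lemma~\ref{lemma:invariance-consistent} then gives equal cores and hence equal $I_H$.

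\textbf{P5.} I would show that $\mathbf A\mapsto\mathbf S(\mathbf A)$ is continuous on positive matrices. One route is the implicit function theorem applied to the scaling equations in $\log$-variables, normalised by fixing one scaling factor; the Jacobian is nonsingular by full indecomposability. An alternative is a compactness argument: entries of $\mathbf S$ lie in $[0,1]$ and $\mathbf S$ is unique. Continuity of $x\log x$ then finishes.

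\textbf{P3.} Use the optimal transport reading. Set $\mathbf C=-\log\mathbf A$ entrywise. Then $\mathbf S(\mathbf A^b)$ is the minimiser of \eqref{eq:transportR} with unit marginals and $\alpha=1/b$, i.e. it minimises $\langle\mathbf C,\mathbf X\rangle-\alpha H(\mathbf X)$ over doubly stochastic $\mathbf X$. Let $\mathbf X_1,\mathbf X_2$ be the minimisers for $\alpha_1<\alpha_2$, with entropies $H_1,H_2$. Write the two optimality inequalities, each optimiser beating the other at its own $\alpha$, and add them. The cost terms cancel and leave $(\alpha_2-\alpha_1)(H_1-H_2)\le0$. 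So $H$ is nondecreasing in $\alpha$, hence nonincreasing in $b$, which gives P3.

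\textbf{P4 (expected main obstacle).} By P7 and Lemma~\ref{lemma:invariance-consistent} we may replace the consistent $\mathbf A$ by $\mathbf J=\mathbf 1\mathbf 1^\top$ with $a_{pq}=\delta$ and $a_{qp}=\delta^{-1}$. By P2 we may take $p=1$, $q=2$. Transposing and swapping indices $1,2$ maps $\delta$ to $1/\delta$, so by P2 and P6 the function $h(\delta)=I_H(\mathbf A^{12}(\delta))$ satisfies $h(\delta)=h(1/\delta)$. P1 gives $h(1)=0<h(\delta)$ for $\delta\ne1$, so the minimum at $\delta=1$ is unique.

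It remains to show $h$ is nondecreasing for $\delta>1$; together with the symmetry this yields quasi-convexity.
\begin{itemize}
\item \emph{First attempt.} Note that $\mathbf A^{12}(\delta)^b=\mathbf A^{12}(\delta^b)$, so P3 applied with base $\mathbf A^{12}(\delta_0)$ gives exactly this monotonicity along $\delta=\delta_0^b$. That covers every $\delta>1$ from any fixed $\delta_0>1$.
\item \emph{Fallback.} If this reduction has a gap, compute the core explicitly. By symmetry, rows and columns $3,\dots,n$ share common scalings, and the $\delta\leftrightarrow\delta^{-1}$ structure forces $r_1=c_2$-type relations. This leaves a small polynomial system in $\delta$, and I would check the sign of $dH/d\delta$ directly.
\end{itemize}
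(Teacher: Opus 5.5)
Your proposal is correct and matches the paper's proof property by property: uniqueness of the core plus Lemma~\ref{lemma:invariance-consistent} for P2, P6 and P7; adding the two optimality inequalities of the entropic problem for P3 (your $\alpha=1/b$ scaling just makes the entropy comparison fall out in one step); and, for P4, the reduction to the perturbed all-ones matrix, the identity $\mathbf A^{12}(\delta_0)^{b}=\mathbf A^{12}(\delta_0^{b})$ combined with P3, and the $\delta\leftrightarrow 1/\delta$ symmetry, so your ``first attempt'' is already complete and the fallback is not needed (for P5 the paper simply cites \citet{Sinkhorn1972} rather than sketching an argument). One harmless slip: transposition alone, or swapping indices $1,2$ alone, sends $\mathbf A^{12}(\delta)$ to $\mathbf A^{12}(1/\delta)$, whereas doing both gives back $\mathbf A^{12}(\delta)$; so P6 (or P2) by itself already yields $h(\delta)=h(1/\delta)$.
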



Triads ($n=3$) play a special role in the analysis of inconsistency, since they are
the smallest pairwise comparison matrices in which inconsistency can arise.
\citet{Csato2019axiomatizations} studied axiomatizations of inconsistency indices on the set
of triads and showed that several natural axioms lead to a unique
inconsistency ranking. In a related direction, \citet{Cavallo2020} analyzed
functional relations and rank correlations among several consistency
indices, emphasizing that different indices may convey essentially the same
information. It is therefore useful to
check how the proposed entropy-based index behaves on triads. The following
result shows that, when \(n=3\), similarly to the other indices considered by \citet{Cavallo2020}, \(I_H\) depends only on the standard triad
deviation from consistency and is monotone with respect to it.
\begin{proposition}
\label{prop:triads}
Let \( \mathbf{A} \in \mathcal A\) with \(n=3\). Then \(I_H\) is a strictly monotone increasing function of $\left| \log {a_{12}a_{23}}{a_{31}} \right|$.
\end{proposition}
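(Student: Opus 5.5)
The plan is to reduce an arbitrary triad to a canonical circulant form using Lemma~\ref{lemma:invariance-consistent}, compute its Sinkhorn core in closed form, and then study the entropy as a function of one real parameter.

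\emph{Reduction to a circulant triad.} Let $t=a_{12}a_{23}a_{31}>0$ and $c=t^{1/3}$. Define
\[
\mathbf B=\begin{pmatrix}1 & c & 1/c\\ 1/c & 1 & c\\ c & 1/c & 1\end{pmatrix}\in\mathcal A ,
\]
so that $b_{12}=b_{23}=b_{31}=c$. Next set $\mathbf A^{*}=(a_{ij}/b_{ij})$, which is the Hadamard quotient of $\mathbf A$ by $\mathbf B$. Its diagonal entries are $1$ and it is reciprocal. Its triad product is $a_{12}a_{23}a_{31}/c^{3}=1$. For $n=3$ this single condition is equivalent to $a^{*}_{ik}=a^{*}_{ij}a^{*}_{jk}$ for all $i,j,k$, so $\mathbf A^{*}\in\mathcal A^{*}$. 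Since $\mathbf A=\mathbf B\circ\mathbf A^{*}$, Lemma~\ref{lemma:invariance-consistent} gives $\mathbf S(\mathbf A)=\mathbf S(\mathbf B)$. Consequently $I_H(\mathbf A)=I_H(\mathbf B)$, and $I_H(\mathbf A)$ depends only on $c$.

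\emph{Core of the circulant triad.} Every row and every column of $\mathbf B$ is a permutation of $(1,c,1/c)$. Hence all row and column sums equal $r=1+c+c^{-1}$. Taking $\mathbf D_1=r^{-1}\mathbf I$ and $\mathbf D_2=\mathbf I$ gives the doubly stochastic matrix $\mathbf S=\mathbf B/r$, which is unique by Theorem~\ref{SinkhornTh}. This is consistent with the circulant core seen in the Example. Therefore
\[
H(\mathbf S)=3\,h(x),\qquad h(x)=-\sum_{u\in\{0,x,-x\}}p_u\log_2 p_u,\qquad p_u=\frac{e^{u}}{1+2\cosh x},
\]
where $x=\log c=\tfrac13\log t$. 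The function $h$ is even in $x$, so $I_H(\mathbf A)=1-h(x)/\log_2 3$ is a function of $|x|=\tfrac13|\log a_{12}a_{23}a_{31}|$ alone.

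\emph{Strict monotonicity.} This is the only step needing real computation, and I would handle it with the exponential-family identity. Let $v$ take the values $(0,1,-1)$ with probabilities $p_u\propto e^{xv}$, and write $Z(x)=1+2\cosh x$. In nats the entropy is $\log Z(x)-x\,\mathbb E_x[v]$. Using $Z'/Z=\mathbb E_x[v]$ and $\tfrac{d}{dx}\mathbb E_x[v]=\operatorname{Var}_x(v)$, differentiation gives
\[
\frac{d}{dx}\bigl(\log Z-x\,\mathbb E_x[v]\bigr)=-x\operatorname{Var}_x(v).
\]
The variance is strictly positive because $v$ is nonconstant with full support. So $h$ is strictly decreasing on $[0,\infty)$, and $I_H$ is strictly increasing in $|x|$, hence in $|\log a_{12}a_{23}a_{31}|$.

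The main point requiring care is the reduction step. It relies on the fact that for $n=3$ consistency is captured by the single cyclic product, and on the Hadamard-invariance of the core from Lemma~\ref{lemma:invariance-consistent}.
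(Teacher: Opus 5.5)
Your proof is correct and is essentially the paper's argument: your circulant $\mathbf B$ is exactly the matrix $\mathbf D_2^{-1}\mathbf A\mathbf D_2$ that the paper builds from the row geometric means $\mathbf D_2=\operatorname{diag}(g_1,g_2,g_3)$, so your consistent factor is $\mathbf A^{*}=(g_i/g_j)$. You reach it through Lemma~\ref{lemma:invariance-consistent} rather than by checking the row and column sums directly, and you obtain the same core (entries $1,c,c^{-1}$ divided by $1+c+c^{-1}$). The only other difference is the last step, where your identity $\frac{d}{dx}\bigl(\log Z-x\,\mathbb E_x[v]\bigr)=-x\operatorname{Var}_x(v)$ gives the sign of the derivative with no computation, while the paper differentiates explicitly to get, in your notation, $-(c^2+4c+1)\log c/(c^2+c+1)^2$; the two agree since $\operatorname{Var}_x(v)=(c+c^{-1}+4)/(1+c+c^{-1})^2$.
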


\subsection{Priority vectors}

{ Thanks to Proposition \ref{prop:main}, we discussed the potential of the diagonal entries of matrices $\mathbf{D}_1$ and $\mathbf{D}_2$ to be representative of the weight vector $\mathbf{w}$. Given that, in the inconsistent case, they may yield different results and that choosing one over the other would be arbitrary, we propose to average the two results as follows
\begin{definition}
\label{def:prioritization}
    Given $\mathbf{A} \in \mathcal{A}$ and its scaling $\mathbf{S}=\mathbf{D}_1 \mathbf{A} \mathbf{D}_2$ we define the Sinkhorn-induced priority vector as
    \begin{equation}
\label{eq:weights}
w_{i}= \sqrt{{d^{(2)}_i}{{d^{(1)}_{i}}^{-1}}} = \sqrt{\frac{d^{(2)}_i}{d^{(1)}_{i}}}  \quad \forall i
\end{equation}
where $(d_{1}^{(1)},\ldots,d_{n}^{(1)}) = \operatorname{diag}(\mathbf{D}_1)$ and $(d_{1}^{(2)},\ldots,d_{n}^{(2)}) = \operatorname{diag}(\mathbf{D}_2)$. 
\end{definition}

We keep in mind the inverse interpretation of the entries on the diagonal of $\mathbf{D}_1$ and that the obtained vector can be easily normalized so that the sum of its components is equal to 1. We recall that a problem involving arbitrariness arises for the eigenvector method, which considers the right eigenvector, even if similar arguments could justify the use of the left eigenvector \citep{JohnsonEtAl1979,Csato2024}.

A number of studies have proposed desirable properties for prioritization methods. \citet{GolanyKress1993} introduced a set of desiderata, and other axiomatic systems were proposed to characterize the geometric mean method \citep{Barzilai1998,Csato2019,Fichtner1986} and the eigenvector method \citep{Fichtner1986}. If we restrict attention to basic desiderata and regularity properties, the prioritization induced by Sinkhorn scaling satisfies the following ones. Here we will write $\mathbf{w}(\mathbf{A})$ to denote the weight vector obtained from the matrix $\mathbf{A}$.

\begin{itemize}

\item[Q1:] \textit{(Uniqueness \citep{GolanyKress1993})}
The prioritization method returns a unique priority vector up to multiplication by a positive scalar.

\item[Q2:] \textit{(Computability \citep{GolanyKress1993})}
The solution of a prioritization method can be
computed, or approximated to any prescribed accuracy, in polynomial time.

\item[Q3:] \textit{(Correctness \citep{Fichtner1986,Csato2019})}
Whenever $\mathbf A$ is
consistent, it recovers the underlying weight vector that generates
$\mathbf A$, up to multiplication by a positive scalar.

\item[Q4:] \textit{(Permutation equivariance \citep{Barzilai1998,Fichtner1986})}
Relabeling the alternatives leads to the same relabeling of the resulting weights. Formally, for every permutation matrix $\mathbf P$, we have $\mathbf w(\mathbf P\mathbf A\mathbf P^\top)\propto
\mathbf P\mathbf w(\mathbf A)$.

\item[Q5:] \textit{(Inversion with respect to inversion of preferences \citep{GolanyKress1993})}
Replacing $\mathbf A$ with $\mathbf A^\top$ inverts the priority vector.
Formally, $\mathbf w(\mathbf A^\top)\propto {1}/{\mathbf w(\mathbf A)}$,
where the reciprocal is taken component-wise.

\item[Q6:] \textit{(Smoothness \citep{Fichtner1986})}
The mapping from the input matrix $\mathbf A$ to the resulting weight vector is continuous, up to multiplication by a positive scalar.

\item[Q7:] \textit{(Equivariance under diagonal similarity)} Rescaling the alternatives by positive factors
    rescales the weights accordingly. Formally, for every diagonal matrix
    $\mathbf D = \mathrm{diag}(d_1,\dots,d_n)$, with $d_i>0$, we have $\mathbf w(\mathbf D \mathbf A \mathbf D^{-1}) \propto \mathbf D\,\mathbf w(\mathbf A)$.
\end{itemize}

\begin{proposition}
\label{prop:priority-properties}
The prioritization method \eqref{eq:weights} satisfies properties Q1--Q7.
\end{proposition}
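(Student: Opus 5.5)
The plan is to handle the seven properties in turn, relying throughout on the uniqueness part of Theorem~\ref{SinkhornTh}. The key structural fact is this: if $\mathbf S=\mathbf D_1\mathbf A\mathbf D_2$, then any other admissible pair has the form $(\gamma\mathbf D_1,\gamma^{-1}\mathbf D_2)$ with $\gamma>0$. Under such a change the ratio $d^{(2)}_i/d^{(1)}_i$ is multiplied by $\gamma^{-2}$ for every $i$, so $\mathbf w$ is multiplied by the common factor $\gamma^{-1}$. This settles \textbf{Q1}. Every entry of $\mathbf A$ is positive, so $\mathbf A$ is fully indecomposable, and the scaling factors are therefore unique up to this common scalar.

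\textbf{Q2} is then a matter of citation. The Sinkhorn--Knopp iteration costs $\mathcal O(n^2)$ per step and approximates the scaling to any prescribed accuracy in polynomial time \citep{AltschulerEtAl2017}. The map $(\mathbf D_1,\mathbf D_2)\mapsto\mathbf w$ is a componentwise square root of ratios, so it adds only $\mathcal O(n)$ work. \textbf{Q3} follows from Proposition~\ref{prop:main}. If $\mathbf A$ is consistent with vector $\mathbf v$, then $d^{(2)}_i\propto v_i$ by item 3 and $1/d^{(1)}_i\propto v_i$ by item 4, so $w_i=\sqrt{d^{(2)}_i/d^{(1)}_i}\propto v_i$.

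For the three transformation properties, I would exhibit an explicit scaling of the transformed matrix and invoke uniqueness.
\begin{itemize}
\item \textbf{Q4}: we have $\mathbf P\mathbf S\mathbf P^\top=(\mathbf P\mathbf D_1\mathbf P^\top)(\mathbf P\mathbf A\mathbf P^\top)(\mathbf P\mathbf D_2\mathbf P^\top)$. This matrix is doubly stochastic, and conjugating a diagonal matrix by a permutation permutes its diagonal, so $\mathbf w(\mathbf P\mathbf A\mathbf P^\top)\propto\mathbf P\mathbf w(\mathbf A)$.
\item \textbf{Q5}: transposing gives $\mathbf S^\top=\mathbf D_2\mathbf A^\top\mathbf D_1$, which is doubly stochastic. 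For $\mathbf A^\top$ the roles of the two factors are swapped, so $w_i(\mathbf A^\top)=\sqrt{d^{(1)}_i/d^{(2)}_i}=1/w_i(\mathbf A)$, up to the scalar ambiguity already handled in Q1.
\item \textbf{Q7}: we have $\mathbf S=(\mathbf D_1\mathbf D^{-1})(\mathbf D\mathbf A\mathbf D^{-1})(\mathbf D\mathbf D_2)$. For $\mathbf D\mathbf A\mathbf D^{-1}$ this gives $d^{(2)}_i d_i/(d^{(1)}_i d_i^{-1})=d_i^2\,d^{(2)}_i/d^{(1)}_i$, and taking square roots yields $w_i d_i$.
\end{itemize}

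The main obstacle is \textbf{Q6}, continuity. First I would fix the scalar ambiguity by normalizing $d^{(1)}_1=1$; this is harmless because $\mathbf w$ is insensitive to the common factor. Then I would show that the map $\mathbf A\mapsto(\mathbf D_1,\mathbf D_2)$ is continuous on positive matrices. My first approach would be the implicit function theorem applied to the scaling equations
\[
r_i\sum_j a_{ij}c_j=1,\qquad c_j\sum_i a_{ij}r_i=1,
\]
with one redundant equation removed and the normalization $r_1=1$ imposed. The Jacobian in $(\mathbf r,\mathbf c)$ is, up to diagonal factors, the matrix $\begin{pmatrix}\mathbf I&\mathbf S\\ \mathbf S^\top&\mathbf I\end{pmatrix}$. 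Because $\mathbf S$ is positive and doubly stochastic, the Perron--Frobenius theorem shows that this matrix has exactly a one-dimensional kernel, spanned by $(\mathbf 1,-\mathbf 1)$. That kernel is exactly the direction removed by the normalization, so the reduced Jacobian is invertible.

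If that argument gets heavy, the fallback is a compactness argument. Along a sequence $\mathbf A_k\to\mathbf A$ with all entries positive, the normalized scalings remain in a compact subset of the positive orthant, since the entries of $\mathbf A_k$ are bounded away from $0$ and $\infty$. Any limit point of the scalings scales $\mathbf A$ to a doubly stochastic matrix, and uniqueness then forces convergence. Either route gives continuity of $\mathbf w$.
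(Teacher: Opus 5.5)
Your treatment of Q1--Q5 and Q7 matches the paper's. For each transformation you exhibit an explicit scaling of the new matrix and invoke uniqueness up to a common scalar: the conjugated pair $(\mathbf P\mathbf D_1\mathbf P^\top,\mathbf P\mathbf D_2\mathbf P^\top)$, the swapped pair $(\mathbf D_2,\mathbf D_1)$ for $\mathbf A^\top$, and $(\mathbf D_1\mathbf D^{-1},\mathbf D\mathbf D_2)$. Q2 and Q3 rest on the same citation and on Proposition~\ref{prop:main}.

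The real difference is Q6. The paper only cites the continuous-dependence theorem of \citet{Sinkhorn1972}, whereas you actually prove it. Your implicit-function argument is sound:
\begin{itemize}
\item In logarithmic coordinates the Jacobian of the scaling equations is exactly $\begin{pmatrix}\mathbf I&\mathbf S\\ \mathbf S^\top&\mathbf I\end{pmatrix}$.
\item A kernel vector $(\mathbf x,\mathbf y)$ must satisfy $\mathbf y=\mathbf S^\top\mathbf S\mathbf y$. Since $\mathbf S^\top\mathbf S$ is positive and stochastic, Perron--Frobenius gives $\mathbf y\propto\mathbf 1$.
\item The single row dependency (the row equations and the column equations have the same total) involves every equation, and the kernel vector has every coordinate nonzero. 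So deleting one equation and fixing $r_1=1$ leaves an invertible square system.
\item Global uniqueness then identifies the local branch with the Sinkhorn factors.
\end{itemize}
This route gives more than Q6 asks, namely smooth dependence of $\mathbf w$ on $\mathbf A$, and it makes the result self-contained, at the cost of length.

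The compactness fallback is more elementary, but you should write out the a priori bound you assert. Suppose the entries lie in $[m,M]$. Then $r_i=1/\sum_j a_{ij}c_j$ gives $r_i/r_k\in[m/M,M/m]$, so with $r_1=1$ every $r_i$ lies in $[m/M,M/m]$. It follows that every $c_j=1/\sum_i a_{ij}r_i$ also stays in a fixed compact subset of $(0,\infty)$ along the sequence.
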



}

It is well-known that, if $n=3$, the eigenvector method and the geometric mean method are equivalent \citep[p. 393]{CrawfordWilliams1985}, and simple counterexamples are sufficient to show that the normalized columns sum method does not share this property. It is possible to show that for $n=3$ the weights obtained from \eqref{eq:weights} are equivalent to those obtained with the eigenvector method and the geometric mean method.



{
\begin{proposition}
\label{prop:3x3}
Let $\mathbf{A}\in\mathcal{A}$ and $n=3$. Then the weight vector obtained from \eqref{eq:weights} is equivalent to the weight vector obtained with the geometric mean method and the eigenvector method.
\end{proposition}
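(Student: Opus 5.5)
The plan is to reduce an arbitrary $3\times 3$ matrix to a canonical form whose Sinkhorn scaling is trivial, and then transport the result back using the equivariance property Q7 from Proposition~\ref{prop:priority-properties}. For the eigenvector method, I will simply invoke the classical equivalence with the geometric mean method for $n=3$ \citep[p. 393]{CrawfordWilliams1985}. It therefore suffices to show that \eqref{eq:weights} is equivalent to the geometric mean vector $\mathbf g$, where $g_i=(\prod_j a_{ij})^{1/3}$.

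\textbf{Step 1 (normalization by $\mathbf g$).} Let $\mathbf D=\operatorname{diag}(g_1,g_2,g_3)$ and $\mathbf T=\mathbf D^{-1}\mathbf A\mathbf D$, so that $t_{ij}=a_{ij}g_j/g_i$. Then $\mathbf T\in\mathcal A$ and every row of $\mathbf T$ has product $\prod_j t_{ij}=\prod_j a_{ij}\cdot(\prod_j g_j)/g_i^3=\prod_j g_j$. Because $\mathbf T$ is reciprocal, the product of all its entries is $1$, so each row product equals $1$.

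\textbf{Step 2 (structure of $\mathbf T$).} Write $t_{12}=x$, $t_{23}=y$, $t_{31}=z$. The row-product conditions give $x/z=1$, $y/x=1$ and $z/y=1$. Hence $x=y=z=t$, where $t^3=a_{12}a_{23}a_{31}$. So $\mathbf T$ is the circulant matrix with $t$ on the cyclic positions $(1,2),(2,3),(3,1)$, $1/t$ on the reverse positions, and ones on the diagonal.

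\textbf{Step 3 (Sinkhorn scaling of $\mathbf T$).} Since $\mathbf T$ is circulant, every row sum and every column sum equals $\sigma=1+t+1/t$. Therefore $\mathbf D_1=\sigma^{-1}\mathbf I$ and $\mathbf D_2=\mathbf I$ make $\mathbf D_1\mathbf T\mathbf D_2$ doubly stochastic. By the uniqueness statement in Theorem~\ref{SinkhornTh} (a positive matrix is fully indecomposable), these are the scaling factors up to a common scalar. The ratio $d^{(2)}_i/d^{(1)}_i$ in \eqref{eq:weights} is invariant under that common scalar, so $\mathbf w(\mathbf T)\propto\mathbf 1$.

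\textbf{Step 4 (transport back).} Since $\mathbf A=\mathbf D\mathbf T\mathbf D^{-1}$, property Q7 gives $\mathbf w(\mathbf A)\propto\mathbf D\,\mathbf w(\mathbf T)\propto \mathbf g$, which is the claim.

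For completeness, I will recall why Q7 holds. If $\mathbf S=\mathbf D_1\mathbf A\mathbf D_2$, then $\mathbf S=(\mathbf D_1\mathbf D^{-1})(\mathbf D\mathbf A\mathbf D^{-1})(\mathbf D\mathbf D_2)$. So the scalings of $\mathbf D\mathbf A\mathbf D^{-1}$ are $\mathbf D_1\mathbf D^{-1}$ and $\mathbf D\mathbf D_2$, and $\sqrt{d^{(2)}_i/d^{(1)}_i}$ is multiplied by exactly $d_i$.

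The only delicate point I expect is Step~3: the identity scaling must be \emph{the} Sinkhorn scaling, and not merely one admissible scaling. This is settled by the uniqueness up to a common factor in Theorem~\ref{SinkhornTh}, together with the observation that \eqref{eq:weights} is insensitive to that factor. The rest is routine algebra.
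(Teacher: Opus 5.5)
Your proof is correct and is essentially the paper's argument. The paper also conjugates $\mathbf A$ by $\operatorname{diag}(\mathbf g)$, checks that all row and column sums become $1+t+t^{-1}$, and reads off $\mathbf D_2=\operatorname{diag}(\mathbf g)$ and $\mathbf D_1=(1+t+t^{-1})^{-1}\operatorname{diag}(\mathbf g)^{-1}$ directly; this is exactly what your Step~3 followed by the Q7 transport produces, and your unit-row-product derivation of the circulant form is a tidier version of the paper's direct computation.

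One wording fix: under the admissible change $(\lambda\mathbf D_1,\lambda^{-1}\mathbf D_2)$, the ratio $d^{(2)}_i/d^{(1)}_i$ is not invariant. It is multiplied by $\lambda^{-2}$ for every $i$, and that uniform factor is what gives $\mathbf w(\mathbf T)\propto\mathbf 1$.
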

}

{Remarkably, it can be shown that the equivalence with the geometric mean method holds also for $n=4$.

\begin{proposition}
\label{prop:4x4}
Let $\mathbf{A}\in\mathcal{A}$ and $n=4$. Then the weight vector obtained from \eqref{eq:weights} is equivalent to the weight vector obtained with the geometric mean method.
\end{proposition}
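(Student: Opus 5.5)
The plan is to reduce to a normalized matrix with trivial geometric-mean weights, using the diagonal-similarity equivariance behind Lemma~\ref{lemma:invariance-consistent} and property Q7, and then to prove the claim for that normalized matrix with a scaling argument that exploits $n=4$.

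\emph{Step 1 (normalization).} Let $g_i=(\prod_j a_{ij})^{1/4}$ be the geometric-mean weights, set $\mathbf G=\operatorname{diag}(g_1,\dots,g_4)$, and let $\mathbf E=\mathbf G^{-1}\mathbf A\mathbf G$, i.e.\ $e_{ij}=a_{ij}g_j/g_i$. Then $\mathbf E\in\mathcal A$ and every row product of $\mathbf E$ equals $1$, so its geometric-mean vector is $\mathbf 1$. Equivalently, $\mathbf L=(\log e_{ij})$ is skew-symmetric with zero row sums.

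If $\mathbf D_1'\mathbf E\mathbf D_2'$ is the Sinkhorn scaling of $\mathbf E$, then $(\mathbf D_1'\mathbf G^{-1})\mathbf A(\mathbf G\mathbf D_2')$ is doubly stochastic. By the uniqueness part of Theorem~\ref{SinkhornTh}, $\mathbf D_1=\mathbf D_1'\mathbf G^{-1}$ and $\mathbf D_2=\mathbf G\mathbf D_2'$ up to a common scalar. Hence
\[
w_i=\sqrt{d^{(2)}_i/d^{(1)}_i}=g_i\sqrt{d'^{(2)}_i/d'^{(1)}_i}.
\]
It therefore suffices to show that $\mathbf D_2'\propto\mathbf D_1'$ for every such $\mathbf E$ of order $4$.

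\emph{Step 2 (reformulation).} We look for a single positive vector $\mathbf x$ and a constant $c>0$ such that $\operatorname{diag}(\mathbf x)\,\mathbf E\,\operatorname{diag}(c\mathbf x)$ is doubly stochastic. Written out, this means
\[
x_i\,(\mathbf E\mathbf x)_i=\tfrac1c,\qquad x_j\,(\mathbf E^\top\mathbf x)_j=\tfrac1c\qquad\text{for all } i,j,
\]
where $(\mathbf E^\top\mathbf x)_j=\sum_i x_i/e_{ji}$ by reciprocity. If such an $\mathbf x$ exists, uniqueness in Theorem~\ref{SinkhornTh} forces $\mathbf D_1'\propto\operatorname{diag}(\mathbf x)$ and $\mathbf D_2'\propto\operatorname{diag}(\mathbf x)$, which completes the proof.

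\emph{Step 3 (the $4\times4$ structure).} The space of skew-symmetric $4\times4$ matrices with zero row sums has dimension $6-3=3$. Explicitly, with $x=L_{12}$, $y=L_{13}$, $u=L_{23}$, $v=L_{24}$ free, we have $L_{14}=-x-y$ and $L_{34}=y+u$, subject to $x=u+v$. I would exploit this small parameter space directly. First, I would try to find a relabeling symmetry: a fixed-point-free involution $\sigma$ of the Klein type, for instance $(12)(34)$, together with a relation between $\mathbf E$ and $\mathbf P_\sigma\mathbf E^{\circ -1}\mathbf P_\sigma^\top=\mathbf P_\sigma\mathbf E^\top\mathbf P_\sigma^\top$. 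Combined with Q4 and Q5, such a relation would force the row- and column-scalings to coincide up to a scalar. If no such symmetry holds in general, the fallback is to solve the two systems in Step 2 symbolically in the three log-parameters. This is analogous to the triad computation of Proposition~\ref{prop:3x3}, which here should reduce to showing that $(x_i(\mathbf E\mathbf x)_i)_i$ and $(x_i(\mathbf E^\top\mathbf x)_i)_i$ can be made constant simultaneously. One could do this by taking the $\mathbf x$ produced by the row equation through a Perron-type fixed point and verifying the column equation through the zero-row-sum constraints on $\mathbf L$.

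I expect Step 3, the existence of a common scaling vector, to be the main obstacle. Steps 1 and 2 are routine consequences of uniqueness, whereas Step 3 is exactly where $n=4$ must enter. It should fail for $n\ge5$, where the space of such $\mathbf L$ has dimension $\binom n2-n$, which is larger and lacks the required symmetry.
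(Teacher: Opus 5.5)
Your Steps 1 and 2 are correct and match the paper's reduction. The paper also passes to $\mathbf G^{-1}\mathbf A\mathbf G$ with unit row products, and then looks for a single diagonal matrix $\widehat{\mathbf P}$ such that $\widehat{\mathbf P}\,\mathbf G^{-1}\mathbf A\mathbf G\,\widehat{\mathbf P}$ is doubly stochastic. The gap is Step 3. It is the whole content of the proposition, and you leave it as a plan.

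\textbf{The symmetry route fails.} Suppose $\mathbf E=\mathbf P_\sigma\mathbf E^\top\mathbf P_\sigma^\top$ held. Uniqueness of the scaling, applied to $\mathbf E^\top$ (the fact behind Q5) and to the permuted matrix (Q4), would only give $\mathbf D_2'\propto\mathbf P_\sigma\mathbf D_1'\mathbf P_\sigma^\top$, i.e.\ $d'^{(2)}_i\propto d'^{(1)}_{\sigma(i)}$. That is not $\mathbf D_2'\propto\mathbf D_1'$. Moreover, a generic $\mathbf E$ satisfies no such relation: for $\sigma=(12)(34)$ it would force $e_{13}=e_{42}$.

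\textbf{The fallback is not an argument either.} The row equation $x_i(\mathbf E\mathbf x)_i=1/c$ is a nonlinear symmetric-scaling problem, not a Perron eigenproblem. Its solution is only implicitly defined, and ``verifying the column equation'' for it is exactly what must be proved. Subtracting your two equations, you need this $\mathbf x$ to lie in $\ker(\mathbf E-\mathbf E^\top)$. Nothing in your plan explains why that happens when $n=4$.

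\textbf{The missing idea is an explicit common scaling vector.} Your dimension count ($6-3=3$) is what allows the paper to write every such $\mathbf E$ with $t_1,\dots,t_4>0$, $t_1t_2t_3t_4=1$, as
$e_{12}=t_3/t_4$, $e_{13}=t_4/t_2$, $e_{14}=t_2/t_3$, $e_{23}=t_1/t_4$, $e_{24}=t_3/t_1$, $e_{34}=t_1/t_2$.
In your coordinates, $4\log t_i$ is, up to sign, the log of the cycle product of the triad not containing $i$; for example $4\log t_1=L_{23}+L_{34}+L_{42}$. Now set $p_i=t_i+t_i^{-1}$. Expand $\prod_{j\ne i}p_j$ into its eight monomials and use $\prod_j t_j=1$. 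This gives
\[
(\mathbf E\mathbf p)_i=(\mathbf E^{\top}\mathbf p)_i=\prod_{j\ne i}p_j,\qquad i=1,\dots,4,
\]
so every row and column sum of $\operatorname{diag}(\mathbf p)\,\mathbf E\,\operatorname{diag}(\mathbf p)$ equals $\kappa=\prod_j p_j$. Hence $\mathbf x=\mathbf p/\sqrt{\kappa}$ with $c=1$ solves your Step~2 system, and your Step~1 then yields $\mathbf w\propto\mathbf g$. Without this identity, or an equivalent explicit construction, your proposal stops short of a proof.
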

}

%
%
%



\subsection{Interpretation of the connection}

We can now proceed and formalize the connection that we established between pairwise comparisons and optimal transport. Given a pairwise comparison matrix $\mathbf{A} \in \mathcal{A}$,  we can obtain its corresponding additive cost matrix $\mathbf{C}=(c_{ij})_{n \times n}$, by means of the inverse of \eqref{eq:f_alfa}, i.e.,
\begin{equation}
\label{eq:f_inverse}
    c_{ij} = f^{-1}_{\alpha}(a_{ij})=- \alpha \log a_{ij}.
\end{equation}
Consider the entropy-regularized optimal transport problem \eqref{eq:transportR}  and call $\mathbf{X}_{\alpha}$ its minimizer.
We can ask whether there is a connection between the solution obtained by applying the Sinkhorn scaling to $\mathbf{A}$ and the solution of the entropy regularized transport problem \eqref{eq:transportR}.

\begin{proposition}
\label{prop:SinkhornOTS}
Let $\mathbf{s} = \mathbf{d} =\mathbf{1}$, $\mathbf{A} \in \mathcal{A}$ and $\alpha > 0$, and define $\mathbf{C} = - \alpha \log \mathbf{A}$. Then the solution to the entropy-regularized optimal transport problem \eqref{eq:transportR} coincides with the doubly stochastic matrix obtained from Sinkhorn scaling. That is, $\mathbf{X}_\alpha = \mathbf{S} = \mathbf{D}_1 \mathbf{A} \mathbf{D}_2$.
\end{proposition}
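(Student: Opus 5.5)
The plan is to show that the Sinkhorn scaling $\mathbf S=\mathbf D_1\mathbf A\mathbf D_2$ satisfies the optimality conditions of \eqref{eq:transportR}. Since that problem is strictly convex, this identifies $\mathbf S$ as its unique minimizer $\mathbf X_\alpha$.

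\textbf{Reformulation and uniqueness.} With $c_{ij}=-\alpha\log a_{ij}$, the computation following \eqref{eq:f_alfa} rewrites the objective as $\alpha\,KL(\mathbf X\|\mathbf A)=\alpha\sum_{ij}x_{ij}\log(x_{ij}/a_{ij})$. The map $x\mapsto x\log x$ is strictly convex on $(0,\infty)$, so the objective is strictly convex on the open positive orthant. The feasible set, the positive matrices with all row and column sums equal to $1$, is convex. Hence there is at most one minimizer. Existence follows by working on the closure (the Birkhoff polytope), which is compact. We extend $x\log x$ continuously by $0$ at $x=0$. A minimizer cannot have zero entries, because $\frac{d}{dx}x\log x\to-\infty$ as $x\to0^+$. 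So moving slightly from a boundary point toward the interior point $\frac1n\mathbf 1\mathbf 1^\top$ strictly decreases the objective. Thus the minimizer is positive and unique.

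\textbf{Lagrangian conditions.} We introduce multipliers $f_i$ for the row constraints and $g_j$ for the column constraints. Stationarity in $x_{ij}>0$ gives
\[
c_{ij}+\alpha(\log x_{ij}+1)-f_i-g_j=0 .
\]
Therefore
\[
x_{ij}=e^{f_i/\alpha-1/2}\,e^{-c_{ij}/\alpha}\,e^{g_j/\alpha-1/2}=u_i\,a_{ij}\,v_j
\]
with $u_i,v_j>0$. The constraints are affine and the objective is convex and differentiable on the relatively open feasible set, so these KKT conditions are also sufficient. Concretely, any feasible $\mathbf X=\mathbf U\mathbf A\mathbf V$ with positive diagonal $\mathbf U,\mathbf V$ is a stationary point of the Lagrangian and hence the global minimizer.

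As a direct check, for any feasible $\mathbf Y$,
\[
KL(\mathbf Y\|\mathbf A)-KL(\mathbf X\|\mathbf A)=KL(\mathbf Y\|\mathbf X)+\sum_{ij}(y_{ij}-x_{ij})(\log u_i+\log v_j).
\]
The last sum vanishes because $\mathbf Y$ and $\mathbf X$ share marginals, and $KL(\mathbf Y\|\mathbf X)\ge0$. This identity gives optimality without appealing to KKT theory, and I would use it as the cleanest route.

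\textbf{Conclusion.} Every $\mathbf A\in\mathcal A$ is strictly positive, so Theorem~\ref{SinkhornTh} provides $\mathbf D_1,\mathbf D_2$ with $\mathbf S=\mathbf D_1\mathbf A\mathbf D_2$ doubly stochastic. This $\mathbf S$ is feasible with unit marginals and has the form $\mathbf U\mathbf A\mathbf V$. By the identity above it minimizes the objective, and by uniqueness $\mathbf X_\alpha=\mathbf S$.

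The result holds for every $\alpha>0$: $\alpha$ only rescales the objective once $\mathbf C$ is defined through $\mathbf A$, so $\mathbf X_\alpha$ does not depend on $\alpha$. The main obstacle is careful bookkeeping rather than depth. One point is verifying the KL identity, in particular the cancellation of the terms $\sum y_{ij}-\sum x_{ij}$ if the generalized KL is used. The other is handling the boundary in the existence/uniqueness step. Both become unnecessary once the direct identity is used together with strict convexity.
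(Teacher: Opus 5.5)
Your proof is correct, but it runs in the opposite direction from the paper's. The paper cites the known structure of the entropic OT minimizer: $\mathbf X_\alpha=\mathrm{diag}(\mathbf u)\,\mathbf K\,\mathrm{diag}(\mathbf v)$ with kernel $\mathbf K=\exp(-\mathbf C/\alpha)=\mathbf A$. It then equates this with $\mathbf D_1\mathbf A\mathbf D_2$. That step tacitly uses the uniqueness part of Sinkhorn's theorem (the doubly stochastic matrix of the form $\mathbf D_1\mathbf A\mathbf D_2$ is unique), because the $\mathbf u,\mathbf v$ of the cited result are not a priori the Sinkhorn factors.

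You instead start from $\mathbf S$, which exists by Theorem~\ref{SinkhornTh}. You certify its optimality directly through the identity $\mathrm{KL}(\mathbf Y\|\mathbf A)-\mathrm{KL}(\mathbf S\|\mathbf A)=\mathrm{KL}(\mathbf Y\|\mathbf S)$ for every feasible $\mathbf Y$. The uniqueness you need is therefore that of the minimizer, not that of the scaling.

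Your route is self-contained: it actually proves the structural fact the paper takes from the literature. It also needs no a priori existence argument, since $\mathbf S$ itself supplies a minimizer. The paper's route is shorter but rests on the citation plus an unstated uniqueness step.

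Your identity already gives uniqueness on its own. All feasible matrices have the same total mass $n$, so Gibbs' inequality gives $\mathrm{KL}(\mathbf Y\|\mathbf S)\ge 0$, with equality only when $\mathbf Y=\mathbf S$. You can therefore drop the compactness and boundary discussion, and even the separate strict-convexity argument.
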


Notice that the role of $\alpha$ in this correspondence differs from
its usual role when the cost matrix is fixed. Indeed, for a fixed
pairwise comparison matrix $\mathbf{A}$, the associated cost matrix is
defined as $\mathbf{C}=-\alpha\log\mathbf{A}$. Hence,
\[
\exp\left(-\frac{\mathbf{C}}{\alpha}\right)=\mathbf{A},
\]
and changing $\alpha>0$ only multiplies the regularized objective by a
positive scalar. Therefore, the resulting minimizer
$\mathbf{X}_{\alpha}=\mathbf{S}$ is independent of $\alpha$. In this
setting, $\alpha$ serves only to determine the scale of the corresponding
additive cost matrix.


This establishes a direct correspondence between the multiplicative pairwise comparison matrix $\mathbf{A}$, the additive cost matrix $\mathbf{C}$, and the doubly stochastic core $\mathbf{S}$. In particular,
Sinkhorn scaling can be interpreted as solving an entropy-regularized optimal transport problem on $\mathbf{C}$, where the diagonal matrices $\mathbf{D}_1$ and $\mathbf{D}_2$ encode the extracted preferential information, while $\mathbf{S}$ provides a normalized, entropy-regularized representation of the original pairwise comparisons.

In the consistent case, Proposition~\ref{prop:main} and the previous discussion imply that one iteration of the Sinkhorn--Knopp algorithm suffices to yield $\mathbf{S} = ({1}/{n}) \mathbf{1} \mathbf{1}^\top$, and the weight vector is fully contained in $\mathbf{D}_2$. Thus, the entropy-regularized transport interpretation is fully compatible with the multiplicative-to-additive correspondence in consistent PCMs. Figure \ref{fig:scheme} presents a conceptual scheme of the connections between pairwise comparison matrices and optimal transport problem.

\begin{figure}[h]
    \centering
    \includegraphics[width=8cm]{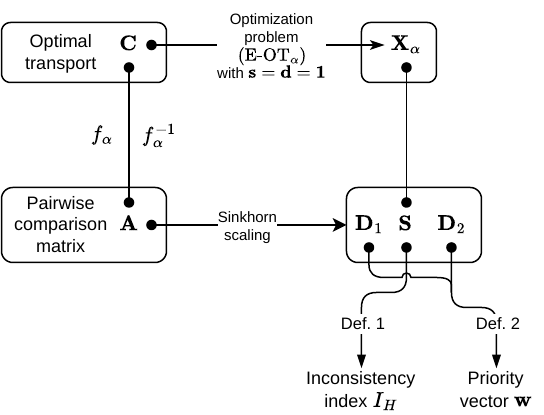}
    \caption{A sketch of the conceptual scheme connecting pairwise comparisons and optimal transport theory.}
    \label{fig:scheme}
\end{figure}

\begin{example}\label{example}
    Consider the following pairwise comparison matrix
    \[
    \mathbf{A}=
    \begin{pmatrix}
  1    & 2 & 2 & 3 \\
  1/2  & 1 &  4  & 1/7 \\
  1/2  & 1/4 & 1 & 1/3 \\
  1/3  &  7  & 3 & 1    
    \end{pmatrix}.
    \]
    If we apply the Sinkhorn--Knopp algorithm to find the Sinkhorn scaling, then
    \[
    \mathbf{S}\approx
    \begin{pmatrix}
 0.2166 & 0.1572 & 0.1068 & 0.5194 \\
 0.2547 & 0.1849 & 0.5023 & 0.0581 \\
 0.4531 & 0.0822 & 0.2233 & 0.2414 \\
 0.0756 & 0.5757 & 0.1676 & 0.1811
 \end{pmatrix}
    \]
and the weight vector found using the prioritization method \eqref{eq:weights} is
    \[
    \mathbf{w}=(0.3985,0.1565,0.0967,0.3483).
    \]
    The doubly stochastic matrix $\mathbf{S}$ can be used to estimate inconsistency of $\mathbf{A}$, leading to $I_{H}(\mathbf{A}) = 0.1457$. The corresponding cost matrix can be obtained by means of \eqref{eq:f_inverse}, in which, for simplicity, we assume $\alpha = 1$,
    \[
    \mathbf{C}=(-\alpha \log a_{ij})_{n \times n} =
    \begin{pmatrix}
 \log 1    & \log 1/2 & \log 1/2 & \log 1/3 \\
  \log 2  & \log 1 &  \log 1/4  & \log 7 \\
  \log 2  &  \log 4 & \log 1 & \log 3 \\
  \log 3  &  \log 1/7  & \log 1/3 & \log 1    
    \end{pmatrix}.
    \]
    If we solve the entropy-regularized optimization problem \eqref{eq:transportR} with $\alpha=1$ and $\mathbf{s}=\mathbf{d}=\mathbf{1}$, we obtain
    \[
    \mathbf{X}_{\alpha}\approx
    \begin{pmatrix}
 0.2166 & 0.1572 & 0.1068 & 0.5194 \\
 0.2547 & 0.1849 & 0.5023 & 0.0581 \\
 0.4531 & 0.0822 & 0.2233 & 0.2414 \\
 0.0756 & 0.5757 & 0.1676 & 0.1811
 \end{pmatrix}.
    \]
    This shows that the Sinkhorn scaling of $\mathbf{A}$ is equivalent, in the unit-marginal square case considered here, to solving the entropy-regularized optimal transport problem \eqref{eq:transportR} associated with $\mathbf{C}$ and $\mathbf{s}=\mathbf{d}=\mathbf{1}$.

\end{example}

We note that $\mathbf{C}=(c_{ij})_{n \times n}$ is an \emph{additive} pairwise comparison matrix with $c_{ii}=0$ for all $i$, and $c_{ij}+c_{ji}=0$ for all $i,j$. Also, if $\mathbf{A}$ is consistent, then $\mathbf{C}$ is additive consistent: $c_{ik}=c_{ij}+c_{jk}$ for all $i,j,k$. This representation of preferences has been studied, among others, by \citet{LavalleFishburn1987} and is employed in some MCDA methods like REMBRANDT \citep{OlsonEtAl1995}.

Let us conclude this section by emphasizing that, under the identification $\mathbf{A}=\exp(-\mathbf{C}/\alpha)$, the Kullback--Leibler divergence measures how far the regularized transport plan $\mathbf{X}$ is from the positive kernel $\mathbf{A}$. In this sense, it quantifies the discrepancy between the original multiplicative structure and its doubly stochastic transport representation. This provides a useful information-theoretic interpretation of the scaling procedure, even if the divergence should not be understood as an independent decision-theoretic loss function.

{Another possible interpretation of the proposed method is that a diagonal similarity
transformation $\mathbf D_1\mathbf A\mathbf D_1^{-1}$ can be interpreted as a reweighting transformation. For both the eigenvector method and the geometric mean method, it is
well known that such a transformation acts equivariantly on the priority vector:
if $\mathbf w$ is the priority vector of $\mathbf A$, then the priority vector of
$\mathbf D_1\mathbf A\mathbf D_1^{-1}$ is $\mathbf D_1\mathbf w$, i.e., the weights are rescaled by the same factors used to reweight the alternatives, so that the change in the priority vector exactly reflects the reweighting transformation (as property Q7 shows). Something analogous happens in
our construction. In fact, by Proposition~\ref{prop:main}, when $\mathbf A$ is consistent,
$\mathbf D_1=\frac1n\mathbf W^{-1}$ and $\mathbf D_2=\mathbf W$, so that
$\mathbf D_2=\frac1n\mathbf D_1^{-1}$: the alternatives are reweighted so that they
become equally important, and $\mathbf S=\mathbf D_1\mathbf A\mathbf D_2$ coincides, up
to the scalar factor $1/n$, with the reweighting transformation
$\mathbf D_1\mathbf A\mathbf D_1^{-1}$.
More generally, $\mathbf D_1$ and $\mathbf D_2$ are independently determined by the
requirement that $\mathbf S$ be doubly stochastic, rather than by a single reweighting
matrix and its inverse; they coincide, up to a scalar factor, with such a matrix and its
inverse if and only if the diagonal entries of $\mathbf S$ are constant, since
$a_{ii}=1$ for all $i$ implies $s_{ii}=d_i^{(1)}d_i^{(2)}$, so that
$\mathbf D_2\propto\mathbf D_1^{-1}$ holds precisely when $s_{ii}$ does not depend on
$i$. This condition always holds when $\mathbf A$ is consistent, and also,
independently of consistency, when $n=3$. 
For $n\geq4$, the condition may still hold for particular inconsistent matrices, but not in
general: this is already visible in Example~\ref{example}, where the diagonal of
$\mathbf S$ is $(0.21664,\,0.184857,\,0.223346,\,0.181125)$, which is not constant, so
that $\mathbf D_2\not\propto\mathbf D_1^{-1}$ in that case. Nevertheless, even when
$\mathbf D_2\not\propto\mathbf D_1^{-1}$, the transformation $\mathbf D_1\mathbf A\mathbf D_2$
strongly resembles the classical reweighting procedure.}

\section{Numerical study}
\label{sec:numerical}

We are ready to study, now by means of numerical experiments, how this new perspective on pairwise comparisons compares to well-known priority elicitation methods and inconsistency indices. We do so with a Monte-Carlo simulation study where we generate a large number $s$ of pairwise comparison matrices of order $n$ on which we apply different analysis techniques. To keep the simulation phase realistic we sampled weights uniformly in the interval $[1,9]$ and we used them to construct consistent pairwise comparison matrices, which are later perturbed with a lognormal noise with $\mu=0$ and $\sigma = 0.43$. This helped obtain preferences that retain some of the initial consistency and are not completely random \citep{Brunelli2018}. During this process, reciprocity is enforced.

First, we analyze prioritization methods with $s=10,\!000$ and $n=5$. For each iteration, we calculate the normalized priority vector using different methods with the addition of a control group where normalized priority vectors are randomly generated { by sampling uniformly from the $(n-1)$-dimensional standard unit simplex}. For each iteration of the simulation, we calculate the distance between pairs of normalized priority vectors. Their average Euclidean and Chebyshev distances over the simulation horizon are reported in the matrix plots in Figure \ref{fig:priorities}.

\begin{figure}[h]
    \centering
\includegraphics[width=0.45\linewidth]{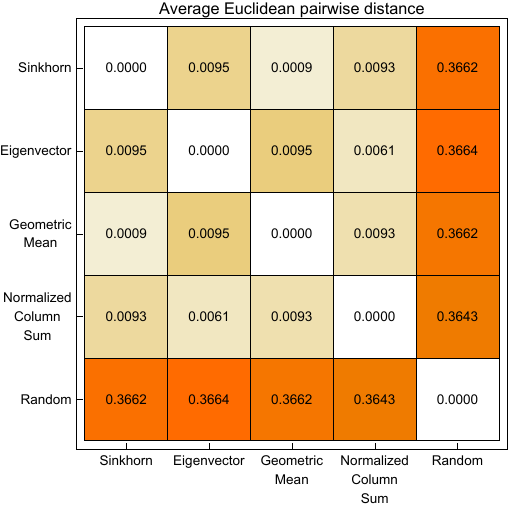}
\hspace{0.7cm}
\includegraphics[width=0.45\linewidth]{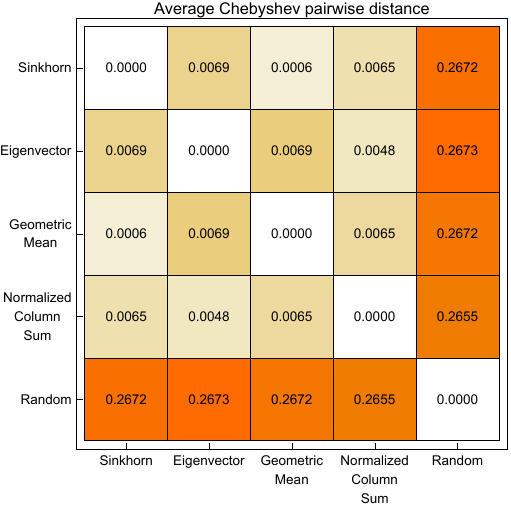}    
    \caption{Average pairwise distances between prioritization methods on randomly generated pairwise comparison matrices with $n=5$.}
    \label{fig:priorities}
\end{figure}

Figure \ref{fig:priorities} shows that, after normalization, the vector obtained as suggested in Definition \ref{def:prioritization} is close to the vectors obtained with the other methods, especially to those obtained via the geometric mean method. At the same time, all four methods form a clearly separated cluster with respect to randomly generated weights. { Tests across different matrix orders $n \geq 5$, which we do not report here, confirmed the results obtained for $n=5$.}

This suggests that prioritization through Sinkhorn scaling is comparable to the established methods and yields meaningful, non-random weights and that it is neither an outlier nor a trivial reformulation, but an alternative prioritization method with its own identity. { Examples show that it leads to substantially different weights when compared to other independent approaches.
\begin{example}
Consider the pairwise comparison matrix
\[
\mathbf{A}=
\begin{pmatrix}
1 & 1 & 3 & 8\\
1 & 1 & 7 & 4\\
1/3 & 1/7 & 1 & 1/2\\
1/8 & 1/4 & 2 & 1
\end{pmatrix}.
\]
If we use the eigenvector method and the Sinkhorn-based prioritization \eqref{eq:weights} we obtain
\[
\mathbf{w}^{\mathrm{EV}}
=
\begin{pmatrix}
0.4239\\
0.4069\\
0.0755\\
0.0937
\end{pmatrix},
\hspace{1cm}
\mathbf{w}^{\mathrm{S}}
=
\begin{pmatrix}
0.4094\\
0.4255\\
0.0726\\
0.0925
\end{pmatrix}
\]
Consequently, the two methods provide different rankings:
\[
\begin{aligned}
\mathbf{w}^{\mathrm{EV}} \quad \Rightarrow \quad
&1 \succ 2 \succ 4 \succ 3,\\
\mathbf{w}^{\mathrm{S}} \quad \Rightarrow  \quad
&2 \succ 1 \succ 4 \succ 3.
\end{aligned}
\]    
\end{example}


Proposition \ref{prop:1step} showed that in the consistent case one iteration of the Sinkhorn algorithm is sufficient: the algorithm can immediately find the priority vector $\mathbf{w}$ satisfying the consistency characterization $a_{ij}={w_i}/{w_j}$ for all $i,j$. More formal results, especially (Q2), are reassuring in terms of worst case complexity. We ran some simulations to understand the convergence of the Sinkhorn--Knopp algorithm to pairwise comparison matrices. We set $\varepsilon = 10^{-10}$ and we counted the number of iterations to reach this tolerance, on a simulation horizon of $s= 10,\!000$. Figure \ref{fig:convergence} shows that a dozen iterations are often sufficient. Moreover, in our implementation, applying the algorithm to a single pairwise comparison matrix required only a few milliseconds.

\begin{figure}[ht]
    \centering
    \includegraphics[width=0.98\linewidth]{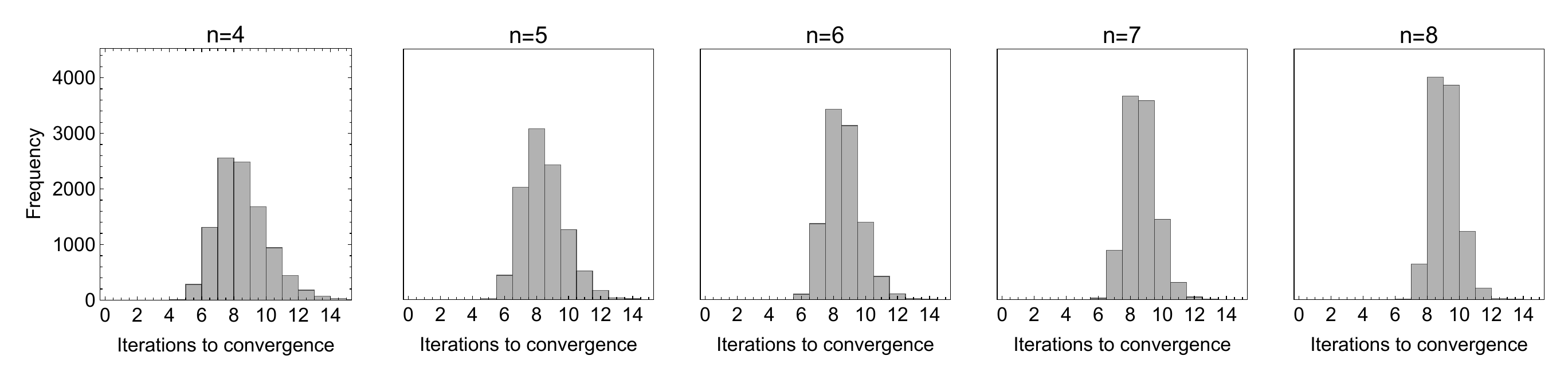}
    \caption{Number of iterations before convergence was obtained, with tolerance $\varepsilon = 10^{-10}$ and $n\in \{ 4,\ldots,8\}$. For each $n$, the number of randomly generated pairwise comparison matrices was $10,\!000$.}
    \label{fig:convergence}
\end{figure}

We used the same simulation strategy to compare the inconsistency index $I_{H}$ with inconsistency indices \eqref{eq:CI}--\eqref{eq:K}. The results are reported in the scatterplot in Figure \ref{fig:scatterplots}.

\begin{figure}[hb]
    \centering
\includegraphics[width=0.98\linewidth]{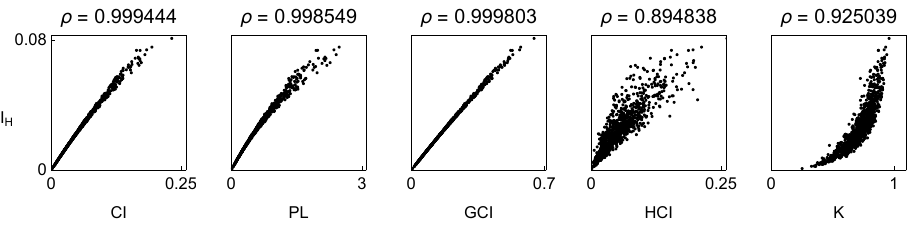}
    \caption{Scatterplots of inconsistency values for randomly generated
reciprocal pairwise comparison matrices with $n=5$. The matrices were
obtained by applying lognormal perturbations to initially consistent
matrices. { For clarity, each plot considers a reduced sample of $1{,}000$ pairwise comparison matrices. Nevertheless, we used the original sample of $10{,}000$ matrices to calculate the Spearman rank correlation coefficients $\rho$ reported on top of each plot.}}
    \label{fig:scatterplots}
\end{figure}

The scatterplots show a behavior that is almost comonotone with the indices $CI$, $GCI$, and $PL$, and substantially different from the other ones. These results suggest that $I_H$ is closely related to the former indices, even if relevant divergences may arise.
\begin{example}
\label{ex:diff_incons}
Consider the following two pairwise comparison matrices whose entries belong to Saaty's scale:
\[
\mathbf{A}=
\begin{pmatrix}
1 & 1/4 & 9 & 1 & 2\\
4 & 1 & 1 & 1/2 & 1/9\\
1/9 & 1 & 1 & 9 & 2\\
1 & 2 & 1/9 & 1 & 8\\
1/2 & 9 & 1/2 & 1/8 & 1
\end{pmatrix}
\hspace{1cm}
\mathbf{A}'=
\begin{pmatrix}
1 & 8 & 1 & 1/9 & 6\\
1/8 & 1 & 1/9 & 9 & 1\\
1 & 9 & 1 & 1/8 & 5\\
9 & 1/9 & 8 & 1 & 1/8\\
1/6 & 1 & 1/5 & 8 & 1
\end{pmatrix}.
\]
We have $CI(\mathbf{A}) = 1.4379$ and $CI(\mathbf{A}') = 2.3955$. Conversely, the assessment is inverted if we consider $I_H(\mathbf{A}) = 0.3954$ and $I_H(\mathbf{A}') = 0.2366$. Given the complexity of the problems underlying the two inconsistency indices, it is difficult to clearly identify the patterns in the preferences
that make one value greater than the other. Nonetheless, we underline a remarkable difference between the two matrices: the largest 3-cycle product in $\mathbf{A}'$ is equal to $a'_{12}a'_{24}a'_{41}=8\cdot9\cdot9=648$ whereas it is equal to $81$ in $\mathbf{A}$. Overall inconsistencies are more uniformly distributed in $\mathbf{A}$ and present peaks in $\mathbf{A}'$.
\end{example}
The results in Example \ref{ex:diff_incons} hint at a substantially different consideration given by inconsistency indices to local inconsistencies, even when the indices appear to be averaging.}

\section{Discussion and conclusions}
\label{sec:conclusions}

In this paper, we proposed a new interpretation of pairwise comparison matrices through the lens of optimal transport theory. By associating a multiplicative pairwise comparison matrix $\mathbf{A}$  with its additive cost matrix $\mathbf{C}=(-\alpha \log a_{ij})_{n\times n}$, we showed that Sinkhorn scaling provides a natural bridge between pairwise comparisons and entropy-regularized optimal transport. This connection is not merely formal: it yields a new interpretation of consistency and leads to two concrete methodological proposals, namely, a prioritization procedure based on Sinkhorn scaling and an entropy-based inconsistency index derived from the associated doubly stochastic matrix.

This perspective also has relevant methodological implications. The equivalence between Sinkhorn scaling and the solution of the entropy-regularized transport problem gives a decision-theoretic interpretation of the scaling procedure, while matrix $\mathbf{S}$ provides a natural basis for measuring inconsistency through Shannon entropy. Moreover, if $n=3$, the prioritization method induced by Sinkhorn scaling is equivalent to the geometric mean method and to the eigenvector method, showing that the proposed approach is compatible with well-established methods while retaining its own interpretation.

The iterative and numerical nature of the proposed prioritization procedure should not be regarded as a drawback. The Sinkhorn--Knopp algorithm is simple, well understood, and computationally efficient. At the same time, the resulting priority vectors appear neither arbitrary nor redundant with respect to standard methods. More broadly, the proposed method occupies an interesting position among weighting procedures: it is exact in the consistent case, yet it also admits an interpretation in terms of entropy-regularized optimization, thus combining a consistency-based and an information-theoretic perspective. { This new interpretation naturally yields a novel inconsistency index and an accompanying prioritization method. While we have verified several of their properties to establish their mathematical soundness, future research should check additional properties of both the inconsistency index \citep{BrunelliFedrizzi2024} and the priority vector \citep{BlanqueroCarrizosa2006,FurtadoJohnson2025}.}

Finally, the present work has been developed in the standard setting of complete pairwise comparison matrices with precise judgments. A natural continuation would be to extend the framework to incomplete and interval-valued pairwise comparison matrices, and more generally to preference models beyond the classical multiplicative setting. For pairwise comparison theory, optimal transport offers a new interpretation of consistency, inconsistency, and prioritization; for optimal transport theory, pairwise comparisons provide a new area of application. We believe that this interaction may stimulate further developments in both directions.

\appendix

\section{Proofs}

\begin{proof}[Proof of Proposition \ref{prop:main}]

$(1) \Rightarrow (2):$ Since $\mathbf{A}$ is consistent, there exists a positive vector $\mathbf{w}$ such that $a_{ij} = {w_i}/{w_j}$ for all $i,j$. Define the diagonal matrices
\[
\mathbf{W} = \operatorname{diag}(w_1,\dots,w_n),
\qquad
\mathbf{W}^{-1} = \operatorname{diag}(w_1^{-1},\dots,w_n^{-1}).
\]

Then $\mathbf{A}$ can be written in matrix form as $\mathbf{A} = \mathbf{W} \, \mathbf{1}\mathbf{1}^\top \, \mathbf{W}^{-1}$ where $\mathbf{1} = (1,\dots,1)^\top \in \mathbb{R}^n$. Define the diagonal matrices
\[
\mathbf{D}_1 = \frac{1}{n} \mathbf{W}^{-1},
\qquad
\mathbf{D}_2 =  \mathbf{W}.
\]
Both $\mathbf{D}_1$ and $\mathbf{D}_2$ have strictly positive diagonal entries. Using $\mathbf{D}_1$ and $\mathbf{D}_2$ as above, we obtain
\[
\begin{aligned}
\mathbf{S}
&= \left( \frac{1}{n} \mathbf{W}^{-1} \right) 
   (\mathbf{W} \mathbf{1}\mathbf{1}^\top \mathbf{W}^{-1})
   \left( \mathbf{W} \right) \\
&= \frac{1}{n} \mathbf{W}^{-1} \mathbf{W} \, \mathbf{1}\mathbf{1}^\top \, \mathbf{W}^{-1} \mathbf{W} \\
&= \frac{1}{n} \mathbf{1}\mathbf{1}^\top.
\end{aligned}
\]
and therefore $(2)$ holds.

$(2)\Rightarrow(3):$ Assume $\mathbf{S}=\frac{1}{n}\mathbf{1}\mathbf{1}^\top$. Since $\mathbf{S}=\mathbf{D}_1\mathbf{A}\mathbf{D}_2$, its entries satisfy $s_{ij}=d^{(1)}_i a_{ij} d^{(2)}_j$. Because $s_{ij}=1/n$ for all $i,j$, we have $a_{ij}=1/(n d^{(1)}_i d^{(2)}_j)$. Setting $i=j$ and using $a_{ii}=1$, we get
\[
1=\frac{1}{n d^{(1)}_i d^{(2)}_i},
\qquad\text{hence}\qquad
d^{(1)}_i=\frac{1}{n d^{(2)}_i}.
\]
Substituting into $a_{ij}=1/(n d^{(1)}_i d^{(2)}_j)$ gives
\[
a_{ij}=\frac{1}{n\left(\frac{1}{n d^{(2)}_i}\right)d^{(2)}_j}=\frac{d^{(2)}_i}{d^{(2)}_j}
\qquad \forall i,j.
\]
and $(3)$ holds.
 
$(3)\Rightarrow(4)$: We know that $a_{ij}=d^{(2)}_i/d^{(2)}_j$ for all $i,j$. Since
$\mathbf S=\mathbf D_1\mathbf A\mathbf D_2$, its entries satisfy
$s_{ij}=d^{(1)}_i a_{ij}d^{(2)}_j$. Hence
$s_{ij}=d^{(1)}_i d^{(2)}_i$ for all $j$. Thus, all entries in the $i$th row
of $\mathbf S$ are equal to $d^{(1)}_i d^{(2)}_i$. Since $\mathbf S$ is
doubly stochastic, the sum of the $i$th row is one, and therefore
$n d^{(1)}_i d^{(2)}_i=1$. It follows that
$d^{(2)}_i=1/(n d^{(1)}_i)$. Substituting this expression into
$a_{ij}=d^{(2)}_i/d^{(2)}_j$, we obtain
\[
a_{ij}
=
\frac{1/(n d^{(1)}_i)}{1/(n d^{(1)}_j)}
=
\frac{d^{(1)}_j}{d^{(1)}_i}
=
\frac{1/d^{(1)}_i}{1/d^{(1)}_j}.
\]
Thus, the component-wise inverse of the vector of diagonal entries of
$\mathbf D_1$ is compatible with $\mathbf A$, and $(4)$ holds.

$(4)\Rightarrow(1)$: We know that $a_{ij}=d^{(1)}_j/d^{(1)}_i$ for all $i,j$. Equivalently,
$a_{ij}=(1/d^{(1)}_i)/(1/d^{(1)}_j)$ for all $i,j$. By setting
$u_i=1/d^{(1)}_i$, we obtain $a_{ij}=u_i/u_j$ for all $i,j$. This is exactly
the definition of consistency of a pairwise comparison matrix. Hence
$\mathbf A\in\mathcal A^*$, and $(1)$ holds.
\end{proof}

\begin{proof}[Proof of Lemma \ref{lemma:invariance-consistent}]
Consider $\mathbf Z=\operatorname{diag}(z_1,\ldots,z_n)$ such that $a^{*}_{ij}=z_i/z_j$. Then we have $\mathbf B=\mathbf A \circ \mathbf{A}^{*}
=
\mathbf Z\mathbf A\mathbf Z^{-1}$ and $\mathbf{A}=\mathbf{Z}^{-1}\mathbf{B}\mathbf{Z}$. From this we can rewrite the Sinkhorn decomposition of $\mathbf{A}$ as
\[
\mathbf S(\mathbf A)
=\mathbf D_1\mathbf A\mathbf D_2
=\mathbf D_1(\mathbf Z^{-1}\mathbf B\mathbf Z)\mathbf D_2
=(\mathbf D_1\mathbf Z^{-1})\,\mathbf B\,(\mathbf Z\mathbf D_2).
\]

Since \(\mathbf D_1\mathbf Z^{-1}\) and \(\mathbf Z\mathbf D_2\) are positive diagonal matrices and \(\mathbf S(\mathbf A)\) is doubly stochastic, this is a valid Sinkhorn scaling of \(\mathbf B\) too. Hence, given the uniqueness, $\mathbf{A}$ and $\mathbf{B}$ share the same core. 
\end{proof}

\begin{proof}[Proof of Proposition \ref{prop:1step}]
Let $\mathbf{A}  \in \mathcal{A}^{*}$ be a consistent pairwise comparison matrix, i.e., there exists $\mathbf{w} \in \mathbb{R}_{>0}^n$ such that $a_{ij} = w_i / w_j$ for all $i,j$. The first step of Sinkhorn scaling normalizes the columns by $c_j = 1 / \sum_i a_{ij}$. For a consistent matrix, $\sum_i a_{ij} = \sum_i w_i / w_j = (\sum_i w_i)/w_j$, so $c_j = w_j / \sum_i w_i$. The column-scaled matrix then has entries $\tilde a_{ij} = a_{ij} c_j = (w_i / w_j) \cdot (w_j / \sum_k w_k) = w_i / \sum_k w_k$, which is constant along each row. The subsequent row scaling computes $r_i = 1 / \sum_j \tilde a_{ij} = (\sum_k w_k)/(n w_i)$, yielding a final scaled matrix $ \mathbf{S} = \operatorname{diag}(\mathbf{r}) \tilde{\mathbf{A}} = (1/n) \mathbf{1} \mathbf{1}^{\top}$, which is exactly doubly stochastic. Therefore, for a consistent pairwise comparison matrix, Sinkhorn scaling converges in a single iteration.
\end{proof}

\begin{proof}[Proof of Proposition \ref{prop:first_iteration}]
After column normalization, the row-scaling vector is  

\[
r_i = \frac{1}{\sum_j \tilde a_{ij}} = \frac{1}{w_i^{\mathrm{NCS}}}, \qquad 
\mathbf{D}_r^{(1)} = \operatorname{diag}(r_1, \dots, r_n).
\]  

The row-scaled matrix is $\mathbf{A}^{(1)} = \mathbf{D}_r^{(1)}\tilde{\mathbf{A}}$. The second column-scaling matrix is defined to normalize the columns of $\mathbf{A}^{(1)}$:

\[
\mathbf{D}_2 = \operatorname{diag}\Bigg( \frac{1}{\sum_i (\mathbf{A}^{(1)})_{i1}}, \dots, \frac{1}{\sum_i (\mathbf{A}^{(1)})_{in}} \Bigg) 
= \operatorname{diag}\Bigg( \frac{1}{\sum_i r_i \tilde a_{i1}}, \dots, \frac{1}{\sum_i r_i \tilde a_{in}} \Bigg).
\]

Substituting $r_i = 1 / w_i^{\mathrm{NCS}}$ gives the desired expression:

\[
\mathbf{D}_2 = \operatorname{diag}\Bigg( \frac{1}{\sum_i \frac{\tilde a_{i1}}{w_i^{\mathrm{NCS}}}}, \dots, \frac{1}{\sum_i \frac{\tilde a_{in}}{w_i^{\mathrm{NCS}}}} \Bigg).
\]

This shows explicitly how the first Sinkhorn iteration transfers the information in the NCS weights into the second column-scaling matrix.  
\end{proof}

\begin{proof}[Proof of Proposition \ref{prop:axioms_consistency}]
{We shall prove each property separately. Only in a few cases the satisfaction of one property is used as an argument to show the satisfaction of another.}

\begin{itemize}
    \item[P1:] The normalized Shannon entropy of $\mathbf{S}$ is equal to one if and only $\mathbf{S}$ is uniform, which, as shown in Proposition \ref{prop:main}, is the case if and only if $\mathbf{A} \in \mathcal{A}^{*}$; therefore, P1 is satisfied.
    \item[P2:] If we consider that $\mathbf{P}^{\top}\mathbf{P}=\mathbf{I}$ then we also obtain
\[
(\mathbf{P} \mathbf{D}_1  \mathbf{P}^{\top})  (\mathbf{P}  \mathbf{A}  \mathbf{P}^{\top})  (\mathbf{P}  \mathbf{D}_2  \mathbf{P}^{\top}) 
= \mathbf{P} \, \mathbf{D}_1  \mathbf{A}  \mathbf{D}_2  \mathbf{P}^{\top} 
= \mathbf{P}  \mathbf{S}  \mathbf{P}^{\top}.
\]
so that we know that row and columns permutations change the order of the entries in $\mathbf{S}$ but not their values. Given that the normalized Shannon entropy is a commutative function, we know that P2 is satisfied.
{ \item[P3:] Let the following represent the linear and the entropic terms of (E-OT$_{\alpha}$)
\[
L(\mathbf X):=\langle \log\mathbf A,\mathbf X\rangle,
\qquad
H(\mathbf X):=-\sum_{i,j}x_{ij}\log x_{ij}.
\]
For the moment we assume $\alpha=1$ but later we'll see that we will not lose in generality. For each $b>0$, the Sinkhorn core of $\mathbf A^{b}$ is characterized by
\[
\mathbf S(\mathbf A^{(b)})
=
\arg\min_{\mathbf X\in\mathcal U_n}
\left\{
-bL(\mathbf X)-H(\mathbf X)
\right\},
\]
where $\mathcal U_n$ is the set of doubly stochastic matrices of order $n$. Let $0<b_1<b_2$ and set
\[
\mathbf X_1=\mathbf S(\mathbf A^{b_1}),
\qquad
\mathbf X_2=\mathbf S(\mathbf A^{b_2}).
\]
By optimality,
\begin{align*}
-b_1L(\mathbf X_1)-H(\mathbf X_1)
& \le
-b_1L(\mathbf X_2)-H(\mathbf X_2),
 \\
-b_2L(\mathbf X_2)-H(\mathbf X_2)
& \le
-b_2L(\mathbf X_1)-H(\mathbf X_1).
\end{align*}
If we add and manipulate them, we reach $(b_2-b_1)\bigl(L(\mathbf X_2)-L(\mathbf X_1)\bigr)\ge 0$,
and therefore $L(\mathbf X_2)\ge L(\mathbf X_1)$.
Using the first optimality inequality, we obtain
\[
H(\mathbf X_1)-H(\mathbf X_2)
\ge
b_1\bigl(L(\mathbf X_2)-L(\mathbf X_1)\bigr)\ge 0.
\]
Thus $H(\mathbf S(\mathbf A^{b_1}))
\ge
H(\mathbf S(\mathbf A^{b_2}))$. Since $I_H(\mathbf A)=1-{H(\mathbf S(\mathbf A))}/{(n\log n)}$, we conclude that $I_H(\mathbf A^{b_1})
\le
I_H(\mathbf A^{b_2})$.

\item[P4:] Since $\mathbf A$ is consistent, there exists $\mathbf w>\mathbf 0$ such that $a_{ij}={w_i}/{w_j}$ for all $i,j$. Let $\mathbf C$ be the consistent matrix generated by $\mathbf w$,
and let $\mathbf E^{pq}(\delta)$ be the reciprocal matrix whose entries are all
equal to one, except $e^{pq}_{pq}(\delta)=\delta$ and 
$e^{pq}_{qp}(\delta)=\delta^{-1}$. Then $\mathbf A^{pq}(\delta)=\mathbf C \circ \mathbf E^{pq}(\delta)$. Since $\mathbf C$ is consistent, Lemma~\ref{lemma:invariance-consistent}
implies $I_H(\mathbf A^{pq}(\delta))
=
I_H(\mathbf E^{pq}(\delta))$. Thus, the consistent component of $\mathbf A^{pq}(\delta)$ can be removed, and it is enough to study the pure perturbation $\mathbf E^{pq}(\delta)$. By permutation invariance, we may assume without loss of generality that
$(p,q)=(1,2)$. Write $\mathbf E(\delta):=\mathbf E^{12}(\delta)$ and let $\mathbf B:=\mathbf E(e)$.
For $\delta\ge 1$, we have $\mathbf E(\delta)=\mathbf B^{(\log\delta)}$, where the power is entrywise. Hence, by monotonicity of $I_H$ under power
transformations (P3), $\delta\mapsto I_H(\mathbf E(\delta))$ is nondecreasing on $[1,\infty)$. For $0<\delta\le 1$, observe that $\mathbf E(\delta)=\mathbf E(1/\delta)^\top$. By transposition invariance of $I_H$ (P6), $I_H(\mathbf E(\delta))
= I_H(\mathbf E(1/\delta))$. Therefore $\delta\mapsto I_H(\mathbf E(\delta))$ is nonincreasing on $(0,1]$.
Thus $\delta\mapsto I_H(\mathbf E(\delta))$ is quasi-convex on $\mathbb R_{>0}$. Finally, $\mathbf E(1)$ is consistent, whereas $\mathbf E(\delta)$ is inconsistent for every $\delta\ne1$. Therefore, thanks to P1, we know that $\delta=1$ is the unique global minimum.}

\item[P5:] To prove P5, let us note that the doubly stochastic matrix $\mathbf{S}$ obtained by
Sinkhorn scaling depends continuously on the entries of $\mathbf{A}$ \citep{Sinkhorn1972}. Since $I_H(\mathbf{A})$ is defined as the normalized Shannon entropy of $\mathbf{S}$, and
Shannon entropy is a continuous function of the matrix entries, it follows that
$I_H$ is continuous with respect to $\mathbf{A}$.

\item[P6:] We check what happens if we transpose $\mathbf{A}$ and we obtain $\mathbf{D}_{2}\mathbf{A}^{\top}\mathbf{D}_{1} = (\mathbf{D}_{1}\mathbf{A}\mathbf{D}_{2})^{\top}
= \mathbf{S}^{\top}$. So, if we transpose $\mathbf{A}$ we obtain $\mathbf{S}^{\top}$, and using the same line of thought used for P2, i.e., the commutativity of the aggregation in $I_{H}$, we know that P6 holds too.
{\item[P7:] Let us define $\mathbf A^* = (d_i/d_j)_{n\times n}$. Since $\mathbf A^*$ is generated by the positive vector
$\mathbf d$, we have $\mathbf A^* \in \mathcal{A}^*$. The entries of $\mathbf B := \mathbf D \mathbf A \mathbf D^{-1}$
satisfy $b_{ij} = (d_i/d_j)a_{ij} = a^*_{ij}a_{ij}$, so $\mathbf B = \mathbf A  \circ \mathbf{A}^*$. By Lemma \ref{lemma:invariance-consistent},
$\mathbf S(\mathbf B) = \mathbf S(\mathbf A)$, and so $I_H(\mathbf B) = I_H(\mathbf A)$.}

\end{itemize}

Each property is satisfied and therefore $I_H$ satisfies P1--{P7}.
\end{proof}

{
\begin{proof}[Proof of Proposition \ref{prop:triads}]
Set $q = a_{12}a_{23}a_{31}$, $t = q^{1/3}$, and $c = 1 + t + t^{-1}$. Let
$$g_i = \left(\prod_{j=1}^{3} a_{ij}\right)^{1/3}, \qquad i = 1,2,3,$$
and set $\mathbf{D}_2 = \mathrm{diag}(g_1,g_2,g_3)$, $\mathbf{D}_1 = \mathbf{D}_2^{-1}$. Consider $\mathbf{B} := \mathbf{D}_1\mathbf{A}\mathbf{D}_2 = \mathbf{D}_2^{-1}\mathbf{A}\mathbf{D}_2$, whose entries are $b_{ij} = a_{ij}(g_j/g_i)$. Since $a_{ii}=1$ for all $i$, we have $g_1=(a_{12}a_{13})^{1/3}$, $g_2=(a_{21}a_{23})^{1/3}$, $g_3=(a_{31}a_{32})^{1/3}$, from which

$$a_{12}\frac{g_2}{g_1} = t, \qquad a_{13}\frac{g_3}{g_1} = t^{-1}.$$

Hence the first row sum of $\mathbf{B}$ is $1+t+t^{-1}$ and the same holds, by an analogous computation, for the second and third rows. Moreover, by reciprocity, $b_{ij}b_{ji} = a_{ij}a_{ji} = 1$, so the column sums of $\mathbf{B}$ coincide with the row sums. Therefore, the matrix $\frac{1}{c}\mathbf{B} = \frac{1}{c}\mathbf{D}_2^{-1}\mathbf{A}\mathbf{D}_2$ is doubly stochastic. Since $\mathbf{A}$ has total support, by Theorem~\ref{SinkhornKnopp} we have the uniqueness of the doubly stochastic matrix of the form $\mathbf{D}_1\mathbf{A}\mathbf{D}_2$ with $\mathbf{D}_1,\mathbf{D}_2$ positive diagonal. So $\frac{1}{c}\mathbf{B}$ coincides with the Sinkhorn core $\mathbf{S}$ of $\mathbf{A}$, whose entries are $1/c$, $t/c$, $t^{-1}/c$, each repeated three times. Since the normalization of entropy is invariant under
a change of logarithm base, we use natural logarithms in what follows. Hence
\[
I_H(\mathbf{A})=1-\frac{H(t)}{\log 3},
\]
where
\[
H(t)=\log c-\frac{t-t^{-1}}{c}\log t.
\]
The expression \(H(t)\) is unchanged if \(t\) is replaced by \(t^{-1}\).
Therefore \(I_H(\mathbf{A})\) depends only on \(|\log t|\), and hence only
on \(|\log q|\), since \(|\log q|=3|\log t|\). By this symmetry, it is enough to consider \(t\ge 1\). A direct
differentiation gives
\[
H'(t)=
-\frac{(t^2+4t+1)\log t}{(t^2+t+1)^2}.
\]
For \(t\ge 1\), we have \(\log t\ge 0\), so \(H'(t)\le 0\), with strict
inequality for \(t>1\). Thus \(H(t)\) decreases as \(t\) moves away from
\(1\), and consequently \(I_H(\mathbf{A})=1-H(t)/\log 3\) is strictly
increasing. Since \(|\log q|=3|\log t|\), \(I_H(\mathbf{A})\) is strictly
increasing with respect to $
\left|\log{a_{12}a_{23}}{a_{31}}\right|$.
\end{proof}
}

{

\begin{proof}[Proof of Proposition \ref{prop:priority-properties}]
We shall prove all properties one by one.
\begin{itemize}
\item[Q1] It follows from the uniqueness of Sinkhorn scaling. The doubly stochastic
matrix $\mathbf S=\mathbf D_1\mathbf A\mathbf D_2$ is unique,
while the scaling matrices $\mathbf D_1$ and $\mathbf D_2$ are unique up to
a common scalar factor. More precisely, if $(\mathbf D_1,\mathbf D_2)$ is
replaced by $(c\mathbf D_1,c^{-1}\mathbf D_2)$, with $c>0$, then
\[
\sqrt{
\frac{c^{-1}d^{(2)}_i}{c d^{(1)}_i}
}
=
\frac1c
\sqrt{
\frac{d^{(2)}_i}{d^{(1)}_i}
}.
\]
Thus all components of $\mathbf w(\mathbf A)$ are multiplied by the same
positive constant. Hence the priority vector is uniquely determined up to
multiplication by a positive scalar.

\item[Q2:] It follows from the polynomial-time approximability of Sinkhorn scaling \citep{AltschulerEtAl2017}. Once the scaling matrices $\mathbf{D}_1$
and $\mathbf{D}_2$ have been computed to the prescribed accuracy, the priority
vector is obtained directly from \eqref{eq:weights}.

\item[Q3:] This follows from Proposition~\ref{prop:main}. If $\mathbf A$ is consistent,
then the diagonal of $\mathbf D_2$ is compatible with $\mathbf A$, and the
component-wise inverse of the diagonal of $\mathbf D_1$ is compatible with
$\mathbf A$. Therefore
\[
\frac{d^{(2)}_i}{d^{(2)}_j}=a_{ij},
\qquad
\frac{d^{(1)}_j}{d^{(1)}_i}=a_{ij}.
\]
It follows that
\[
\frac{w_i}{w_j}
=
\sqrt{
\frac{d^{(2)}_i/d^{(1)}_i}{d^{(2)}_j/d^{(1)}_j}
}
=
\sqrt{
\frac{d^{(2)}_i}{d^{(2)}_j}
\frac{d^{(1)}_j}{d^{(1)}_i}
}
=
\sqrt{a_{ij}^2}
=
a_{ij}.
\]

\item[Q4:] It follows from the equivariance of Sinkhorn scaling under simultaneous row and column permutations. Let $\mathbf P$ be a permutation matrix. If
$\mathbf S(\mathbf A)=\mathbf D_1\mathbf A\mathbf D_2$, then
\[
(\mathbf P\mathbf D_1\mathbf P^\top)
(\mathbf P\mathbf A\mathbf P^\top)
(\mathbf P\mathbf D_2\mathbf P^\top)
=
\mathbf P\mathbf S(\mathbf A)\mathbf P^\top.
\]
The matrix on the right-hand side is doubly stochastic. Hence, by uniqueness of the Sinkhorn-scaled matrix, $\mathbf S(\mathbf P\mathbf A\mathbf P^\top) = \mathbf P\mathbf S(\mathbf A)\mathbf P^\top$. Moreover, the diagonal entries of the corresponding scaling matrices are
permuted in the same way. Hence, $\mathbf w(\mathbf P\mathbf A\mathbf P^\top)
\propto
\mathbf P\mathbf w(\mathbf A)$.

\item[Q5:] It follows from the fact that transposition exchanges the two Sinkhorn
scaling matrices. Indeed, if
$\mathbf S(\mathbf A)=\mathbf D_1\mathbf A\mathbf D_2$, then
\[
\mathbf S(\mathbf A)^\top
=
\mathbf D_2\mathbf A^\top\mathbf D_1.
\]
Since $\mathbf S(\mathbf A)^\top$ is doubly stochastic, this is a Sinkhorn
scaling of $\mathbf A^\top$. Hence, up to a common scalar factor,
\[
\mathbf D_1(\mathbf A^\top)\propto \mathbf D_2(\mathbf A),
\qquad
\mathbf D_2(\mathbf A^\top)\propto \mathbf D_1(\mathbf A).
\]
Consequently,
\[
w_i(\mathbf A^\top)
=
\sqrt{
\frac{d^{(2)}_i(\mathbf A^\top)}
{d^{(1)}_i(\mathbf A^\top)}
}
\propto
\sqrt{
\frac{d^{(1)}_i(\mathbf A)}
{d^{(2)}_i(\mathbf A)}
}
=
\frac{1}{w_i(\mathbf A)}.
\]
Therefore $\mathbf w(\mathbf A^\top)\propto 1/\mathbf w(\mathbf A)$, where
the reciprocal is taken component-wise.

\item[Q6:] This follows from the continuous dependence of Sinkhorn scaling on the entries of a positive matrix \citep{Sinkhorn1972}. Once a normalization of the scaling factors is fixed, the diagonal entries of $\mathbf D_1$ and $\mathbf D_2$ depend continuously on the entries of $\mathbf A$. Since $\mathbf w(\mathbf A)$ is obtained from them through division and square root, it depends continuously on $\mathbf A$, up to multiplication by a positive scalar.

\item[Q7:] Finally, let $\mathbf B := \mathbf D\mathbf A\mathbf D^{-1}$ and let $\mathbf S = \mathbf D_1 \mathbf A \mathbf D_2$ be the Sinkhorn scaling of $\mathbf A$. Define
$\mathbf D_1' := \mathbf D_1\mathbf D^{-1}$ and $\mathbf D_2' := \mathbf D\mathbf D_2$, which are diagonal matrices. Then

$$\mathbf D_1' \mathbf B \mathbf D_2' = (\mathbf D_1\mathbf D^{-1})(\mathbf D\mathbf A\mathbf D^{-1})(\mathbf D\mathbf D_2) = \mathbf D_1\mathbf A\mathbf D_2 = \mathbf S,$$
so $(\mathbf D_1', \mathbf D_2')$ is a valid Sinkhorn scaling of $\mathbf B$, with the same doubly stochastic matrix $\mathbf S$ as $\mathbf A$. By Theorem \ref{SinkhornKnopp}, we have the uniqueness of the scaling matrices up to a common scalar factor, then

$$d^{(1)}_i(\mathbf B) = \frac{d^{(1)}_i(\mathbf A)}{d_i}, \qquad d^{(2)}_i(\mathbf B) = d_i\,d^{(2)}_i(\mathbf A), \qquad i=1,\dots,n.$$
Substituting into \eqref{eq:weights}, we obtain

$$w_i(\mathbf B) = \sqrt{\frac{d^{(2)}_i(\mathbf B)}{d^{(1)}_i(\mathbf B)}}
= \sqrt{\frac{d_i\,d^{(2)}_i(\mathbf A)}{d^{(1)}_i(\mathbf A)/d_i}}
= d_i\sqrt{\frac{d^{(2)}_i(\mathbf A)}{d^{(1)}_i(\mathbf A)}} = d_i\,w_i(\mathbf A),$$

for all $i$, and therefore $w(\mathbf B) \propto \mathbf D\,\mathbf w(\mathbf A)$.

\end{itemize}

Therefore, the proposed prioritization method satisfies Q1--Q7.
\end{proof}

}

\begin{proof}[Proof of Proposition \ref{prop:3x3}]
Let $\mathbf{A}\in\mathcal{A}$ and $n=3$. Let us call the row geometric mean
\[
g_i:=\left(\prod_{j=1}^3 a_{ij}\right)^{1/3}\qquad i=1,2,3,
\]
and set $\mathbf{D}_2:=\operatorname{diag}(g_1,g_2,g_3)$ and $\mathbf{D}_1:=\mathbf{D}_2^{-1}$. Consider
\[
\mathbf{B}:=\mathbf{D}_1\mathbf{A}\mathbf{D}_2
=\mathbf{D}_2^{-1}\mathbf{A}\mathbf{D}_2.
\]
Its entries are $b_{ij}={a_{ij}({g_j}/{g_i})}$. Since the diagonal entries of $\mathbf{A}$ are equal to $1$, we have
\[
g_1=(a_{12}a_{13})^{1/3}\qquad
g_2=(a_{21}a_{23})^{1/3}\qquad
g_3=(a_{31}a_{32})^{1/3}.
\]
A direct computation, and an application of reciprocity, gives
\[
a_{12}\frac{g_2}{g_1}
=
\left(a_{12}a_{23}a_{31}\right)^{1/3}
\qquad
a_{13}\frac{g_3}{g_1}
=
\left(\frac{1}{a_{12}a_{23}a_{31}}\right)^{1/3}.
\]
Hence, setting $t:=\left(a_{12}a_{23}a_{31}\right)^{1/3}$,
the first row sum of \(\mathbf{B}\) is $1+t+t^{-1}$. Similarly, the second and third row sums are also equal to $1+t+t^{-1}$. Thus, all row sums of \(\mathbf{B}\) are equal.
Moreover, by reciprocity,
\[
b_{ij}b_{ji}
=
\left(a_{ij}\frac{g_j}{g_i}\right)\left(a_{ji}\frac{g_i}{g_j}\right)
=
a_{ij}a_{ji}
=
1,
\]
so each off-diagonal pair of \(\mathbf{B}\) is reciprocal. It follows that the column sums of \(\mathbf{B}\) are equal to the row sums as well. Therefore, after division by the common sum \(c:=1+t+t^{-1}\), the matrix
\[
\frac{1}{c}\mathbf{B}
=
\frac{1}{c}\mathbf{D}_2^{-1}\mathbf{A}\mathbf{D}_2
\]
is doubly stochastic. Hence, the Sinkhorn scaling may be chosen so that the diagonal of \(\mathbf{D}_2\) is proportional to
$\mathbf{g}=(g_1,g_2,g_3)^{\top}$, which is the geometric mean method. Similar arguments show that the diagonal of $\mathbf{D}_1$ is inversely proportional to the weights obtained with the geometric mean method. By the construction above, the Sinkhorn scaling can be chosen so that
\(\mathbf{D}_2=\operatorname{diag}(g_1,g_2,g_3)\) and
\(\mathbf{D}_1=\frac{1}{c}\operatorname{diag}(g_1,g_2,g_3)^{-1}\). Therefore,
\(d_i^{(2)}=g_i\) and \(d_i^{(1)}=\frac{1}{c g_i}\). Using
\eqref{eq:weights}, we obtain
\[
w_i
=
\sqrt{\frac{d_i^{(2)}}{d_i^{(1)}}}
=
\sqrt{\frac{g_i}{1/(c g_i)}}
=
\sqrt{c}\,g_i .
\]
Hence \(\mathbf w\propto \mathbf g\). Since, for \(n=3\), the geometric
mean method and the eigenvector method yield equivalent priority vectors  \citep[p. 393]{CrawfordWilliams1985},
the priority vector obtained from \eqref{eq:weights} is equivalent to both
of them.
\end{proof}

{
\begin{proof}[Proof of Proposition \ref{prop:4x4}]
Let \(\mathbf{g}=(g_1,g_2,g_3,g_4)^\top\) be the geometric mean vector,
where \[ g_i=\left(\prod_{j=1}^{4}a_{ij}\right)^{1/4},\]
and set \(\mathbf{G}=\operatorname{diag}(g_1,g_2,g_3,g_4)\) and consider $\mathbf{B}=\mathbf{G}^{-1}\mathbf{A}\mathbf{G}.$ Since \(\mathbf{A}\) is reciprocal, \(\prod_{i=1}^{4}g_i=1\). Moreover,
\[
\prod_{j=1}^{4}b_{ij}
=
\frac{\prod_{j=1}^{4}a_{ij}\prod_{j=1}^{4}g_j}{g_i^4}
=1,
\]
so every row of \(\mathbf{B}\) has geometric mean equal to one. Any reciprocal \(4\times4\) matrix with unit row products can be written,
for suitable \(x_1,x_2,x_3,x_4>0\) satisfying
\(x_1x_2x_3x_4=1\), as
\[
\mathbf{B}=
\begin{pmatrix}
1 & x_3/x_4 & x_4/x_2 & x_2/x_3\\
x_4/x_3 & 1 & x_1/x_4 & x_3/x_1\\
x_2/x_4 & x_4/x_1 & 1 & x_1/x_2\\
x_3/x_2 & x_1/x_3 & x_2/x_1 & 1
\end{pmatrix}.
\]
Set \(p_i=x_i+x_i^{-1}\), let
\(\mathbf{p}=(p_1,p_2,p_3,p_4)^\top\), and define
\(\mathbf{P}=\operatorname{diag}(p_1,p_2,p_3,p_4)\). A direct expansion, using
\(x_1x_2x_3x_4=1\), gives
\[
(\mathbf{B}\mathbf{p})_i
=
(\mathbf{B}^{\top}\mathbf{p})_i
=
\prod_{j\ne i}p_j,
\qquad i=1,\ldots,4.
\]
Consequently, every row sum and every column sum of
\(\mathbf{P}\mathbf{B}\mathbf{P}\) is equal to $\kappa=\prod_{j=1}^{4}p_j.$
Therefore, setting
$\widehat{\mathbf{P}}=({1}/{\sqrt{\kappa}})\mathbf{P}$,
the matrix $\widehat{\mathbf{P}}\mathbf{B}\widehat{\mathbf{P}}$ is doubly stochastic. Since \(\mathbf{B}=\mathbf{G}^{-1}\mathbf{A}\mathbf{G}\), we have
$
\widehat{\mathbf{P}}\mathbf{B}\widehat{\mathbf{P}}
=
\bigl(\widehat{\mathbf{P}}\mathbf{G}^{-1}\bigr)
\mathbf{A}
\bigl(\mathbf{G}\widehat{\mathbf{P}}\bigr).
$
Thus, the Sinkhorn scaling matrices of \(\mathbf{A}\) may be chosen as $\mathbf{D}_1=\widehat{\mathbf{P}}\mathbf{G}^{-1}$, and $\mathbf{D}_2=\mathbf{G}\widehat{\mathbf{P}}$.
Hence
\[
d_i^{(1)}=\frac{p_i}{\sqrt{\kappa}\,g_i},
\qquad
d_i^{(2)}=\frac{g_ip_i}{\sqrt{\kappa}},
\]
and therefore
\[
w_i
=
\sqrt{\frac{d_i^{(2)}}{d_i^{(1)}}}
=
\sqrt{
\frac{g_ip_i/\sqrt{\kappa}}
     {p_i/(\sqrt{\kappa}\,g_i)}
}
=
g_i.
\]
Thus, the weight vector obtained from the Sinkhorn factors is equivalent
to the geometric mean vector.
\end{proof}
}

\begin{proof}[Proof of Proposition \ref{prop:SinkhornOTS}]
Sinkhorn scaling produces diagonal matrices $\mathbf{D}_1 = \mathrm{diag}(u_1,\dots,u_n)$ and $\mathbf{D}_2 = \mathrm{diag}(v_1,\dots,v_n)$ such that $\mathbf{S} = \mathbf{D}_1 \mathbf{A} \mathbf{D}_2$ satisfies the marginal constraints and is therefore doubly stochastic. Define the kernel matrix
\[
\mathbf{K} = \exp(-\mathbf{C}/\alpha) = \exp(\log \mathbf{A}) = \mathbf{A}.
\]
It is well-known \citep{Solomon2018} that the solution of the entropy-regularized transport problem with kernel $\mathbf{K}$ has the form
\[
\mathbf{X}_\alpha = \mathrm{diag}(u_1,\dots,u_n) \, \mathbf{K} \, \mathrm{diag}(v_1,\dots,v_n).
\]
Substituting $\mathbf{K} = \mathbf{A}$ gives
\[
\mathbf{X}_\alpha = \mathrm{diag}(u_1,\dots,u_n) \, \mathbf{A} \, \mathrm{diag}(v_1,\dots,v_n) = \mathbf{D}_1 \mathbf{A} \mathbf{D}_2 = \mathbf{S}.
\]

Hence, the Sinkhorn scaling procedure and the solution of the entropy-regularized transport problem are equivalent. Moreover, if $\mathbf{A}$ is consistent, Proposition~\ref{prop:main} implies
\[
\mathbf{S} = \mathbf{D}_1 \mathbf{A} \mathbf{D}_2 = \frac{1}{n} \mathbf{1} \mathbf{1}^\top,
\]
so all preferential information is absorbed into $\mathbf{D}_2$
(or equivalently into the componentwise inverse of the diagonal of
$\mathbf{D}_1$, up to a common factor), and $\mathbf{S}$ is uniform.
\end{proof}

\end{document}